\newcommand{\R}{\mathbb{R}}
\newcommand{\mM}{\mathcal{M}}
\newcommand{\bV}{\mathbf{V}}
\newcommand{\bh}{h}
\newcommand{\bx}{x}
\newcommand{\by}{y}
\newcommand{\bP}{{\mathbf{P}}}
\newcommand{\bS}{{\mathbf{S}}}
\newcommand{\bI}{{\mathbf{I}}}
\newcommand{\bA}{{\mathbf{A}}}
\newcommand{\bB}{{\mathbf{B}}}
\newcommand{\bN}{{\mathbf{N}}}
\newcommand{\VEC}{{\mathrm{vec}}}
\newcommand{\abs}[1]{{\left\lvert #1 \right\rvert}}
\newcommand{\changed}[1]{{#1}}
\newcommand{\changedtwo}[1]{{#1}}
\newcommand{\ignore}[1]{{}}
\DeclareMathOperator{\rank}{rank}
\DeclareMathOperator*{\argmin}{argmin}
\DeclareMathOperator{\col}{col}
\DeclareMathOperator{\row}{row}
\newtheorem*{proposition*}{Proposition}
\newtheorem{proposition}{Proposition}[section]
\newtheorem{lemma}[proposition]{Lemma}
\newtheorem{theorem}[proposition]{Theorem}
\theoremstyle{remark}
\newtheorem{remark}[proposition]{Remark}
\newtheorem*{remark*}{Remark}
\numberwithin{equation}{section}
\title{Alternating least squares as moving subspace correction 
%\\ {\color{red} Alternative: Alternating least squares as a moving subspace correction method}
}
\author{Ivan V. Oseledets\thanks{Skolkovo Institute of Science and Technology, Skolkovo Innovation Center, 143026 Moscow, Russia (\texttt{i.oseldets@skoltech.ru})} \and Maxim V. Rakhuba\thanks{Seminar for Applied Mathematics, ETH Zurich, R\"amistrasse 101, 8092 Zurich, Switzerland (\texttt{maksim.rakhuba@sam.math.ethz.ch})} \thanks{Work by MR on this project was performed while he was a junior research scientist at Skolkovo Institute of Science and Technology} \and Andr\'e Uschmajew\thanks{Hausdorff Center for Mathematics \& Institute for Numerical Simulation, University of Bonn, 53115 Bonn, Germany. Current address:  Max Planck Institute for Mathematics in the Sciences, 04103
Leipzig, Germany (\texttt{uschmajew@mis.mpg.de}).
}
}
\date{} % Activate to display a given date or no date (if empty),
\begin{document}
\maketitle

\vspace*{-2ex}

\begin{abstract}
In this note we take a new look at the local convergence of alternating optimization methods for low-rank matrices and tensors. Our abstract interpretation as sequential optimization on moving subspaces yields insightful reformulations of some known convergence conditions that focus on the interplay between the contractivity of classical multiplicative Schwarz methods with overlapping subspaces and the curvature of low-rank matrix and tensor manifolds. While the verification of the abstract conditions in concrete scenarios remains open in most cases, we are able to provide an alternative and conceptually simple derivation of the \changed{asymptotic convergence rate} of the two-sided block power method of numerical algebra for computing the dominant singular subspaces of a rectangular matrix. This method is equivalent to an alternating least squares method applied to a distance function. %{\color{red} Some implications regarding ALS for low-rank tensor approximation in tensor train format are also given.} 
The theoretical results are illustrated and validated by numerical experiments.

\medskip

\noindent
\textit{Keywords:} ALS, nonlinear Gauss--Seidel method, low-rank approximation, local convergence 
\end{abstract}

\section{Introduction}

Consider a real-valued function $F(x)$, where $x = (\xi_1,\dots,\xi_N)$ is a tuple of vectors $\xi_i \in \R^{n_i}$. The \emph{alternating optimization} (AO) or \emph{block coordinate descent} methods try to solve the problem
\[
\min F(\bx) = \min F(\xi_1,\dots,\xi_N)
\]
by alternating between updates of single (block) variables $\xi_i$ while fixing all the other $\xi_j$, $j \neq i$:
\[
\xi_i \leftarrow \argmin_{\xi \in \R^{n_i}} F(\xi_1,\dots,\xi_{i-1},\xi,\xi_{i+1},\dots,\xi_N).
\]
In other words, such an update is a minimization of $F$ on the affine linear manifold
\(
\bx +  T_i
\)
with the linear subspaces
\begin{equation}\label{eq: subspaces of BCD method}
T_i =  \{0\} \times \dots \times \{ 0\} \times \R^{n_i} \times \{0\} \times \dots \times \{0\} .
\end{equation}
\changed{If $F$ is smooth enough, the minimization in every substep may be replaced by finding a critical point of $F$ on $x + T_i$. The method is then also known under the name \emph{nonlinear (block) Gauss--Seidel method}.}

Such an approach is effective if optimization on the hyperplanes $\bx +  T_i$ is easy, for instance, because it is of lower dimension, or because $F$ takes a simple form on it. Obviously, the hyperplanes $\bx +  T_i$ are changing during this process, as they depend on $\bx$. \changed{The subspaces $T_i$, however, do not change with $x$. Based on this, the linearization of this method around a critical point of $F$ corresponds to a classical Gauss--Seidel (successive relaxation) method applied to a quadratic model of $F$ and, hence its local convergence to a critical point can be shown under suitable assumptions on the Hessian in the critical point; see~\cite{OR1970} or sec.~\ref{sec: locally constant subspaces}.}

There are too many areas of application of AO to mention here. In this paper, we wish to focus on multilinear optimization. This includes low-rank matrix and tensor approximation. Here the scenario is slightly more structured. Let us explain this using the example of low-rank matrix optimization. Assume we are given a function $f \colon \R^{m \times n} \to \R$ on the space of real $m \times n$ matrices, and we wish to minimize it subject to the constraint $\rank(X) \le k$. Then it is natural to use the parametrization $X = U V^T$ with $U \in \R^{m \times k}$, $V \in \R^{n \times k}$, and attempt solving
\[
\min F(U,V) \coloneqq f(U V^T)
\]
via AO between $U$ and $V$:
\begin{equation}\label{eq: AO for matrices}
U \leftarrow \argmin_{\hat U \in \R^{m \times k}} f(\hat U V^T), \qquad V \leftarrow \argmin_{\hat V \in \R^{n \times k}} f(U \hat V^T).
\end{equation}
\changed{
The easiest example to consider is the Euclidean distance function
\[
f(X) = \frac{1}{2} \| X - B \|^2_F = \frac{1}{2} \sum_{i,j} (x_{ij} - b_{ij})^2
\]
to a given matrix $B$. In this case the AO strategy reads
\begin{equation}\label{eq: ALS AO}
U \leftarrow \argmin_{\hat U \in \R^{m \times k}} \frac{1}{2} \| U V^T - B \|_F^2, \qquad V \leftarrow \argmin_{\hat V \in \R^{n \times k}} \frac{1}{2} \| U V^T - B \|_F^2,
\end{equation}
and is called the \emph{alternating least squares} algorithm. It will be discussed in detail in sections~\ref{sec: power method} and~\ref{sec: als}.
}

%Let $XX^+$ and $X^+ X$ denote the orthogononal projections on the column space $\col(X)$ resp. row space $\row(X)$ of a matrix $X$ (here $X^+$ denotes the Moore-Penrose inverse). 
An alternative viewpoint, however, which is the starting point for the present work, is that in terms of the initial function $f$, the AO procedure~\eqref{eq: AO for matrices} amounts to a sequence of optimization problems
\begin{equation}\label{eq: alternaing subspaces for matrices}
X \leftarrow \argmin_{X \in T_1(X)} f(X), \qquad X \leftarrow \argmin_{X \in T_2(X)} f(X),
\end{equation}
on \emph{varying linear subspaces}
\begin{equation}\label{eq: subspace 1}
T_1(X) = \{ Y \in \R^{m \times n} \colon \row(Y) \subseteq \row(X) \},
\end{equation}
respectively,
\begin{equation}\label{eq: subspace 2}
T_2(X) = \{ Y \in \R^{m \times n} \colon \col(Y) \subseteq \col(X) \}.
\end{equation}
Here $\row$ and $\col$ denote the row and column space of a matrix. To be precise, one should emphasize that the update rules~\eqref{eq: AO for matrices} and~\eqref{eq: alternaing subspaces for matrices} are only equivalent as long as all constructed matrices retain full possible rank $k$. Also note that $X \in T_1(X)$ and $X \in T_2(X)$, hence, we can formally see~\eqref{eq: alternaing subspaces for matrices} as minimizations on affine subspaces $X + T_1(X)$ and $X + T_2(X)$ as for the classical AO method. 

The point we wish to make is that the formulation~\eqref{eq: alternaing subspaces for matrices} is a more appropriate viewpoint on the AO method~\eqref{eq: AO for matrices}, since it is intrinsically invariant under different choices of $U$ and $V$ in the bilinear parametrization $X = UV^T$, which, at least formally, is highly nonunique. To see this more clearly, let us compare the two following pseudocodes:

\noindent
\parbox[t]{.45\textwidth}{
\hrule height 0pt width 0pt
\begin{algorithm}[H]
\DontPrintSemicolon
\KwIn{$U_0 \in \R^{m \times k}$, $V_0 \in \R^{n \times k}$.}
\For{$\ell = 0,1,2,\dots$}{
$\displaystyle U_{\ell + 1} \coloneqq \argmin_{\hat U \in \R^{m \times k}} f(\hat U V_\ell^T)$\;
$\displaystyle V_{\ell+1} \coloneqq \argmin_{\hat V \in \R^{n \times k}} f(U_{\ell+1} \hat V^T$)
}
\caption{Low-rank AO, vanilla}\label{Alg:ALS1}
\end{algorithm}
}
\hfill
\parbox[t]{.48\textwidth}{
\hrule height 0pt width 0pt
\begin{algorithm}[H]
\DontPrintSemicolon
\KwIn{$U_0 \in \R^{m \times k}$, $V_0 \in \R^{n \times k}$.}
\For{$\ell = 0,1,2,\dots$}{
$\displaystyle U \leftarrow \argmin_{\hat U \in \R^{m \times k}} f(\hat U V_\ell^T), \quad U = Q_1R_1$\;
$\displaystyle V \leftarrow \argmin_{\hat V \in \R^{n \times k}} f(Q_1 \hat V^T), \quad V = Q_2 R_2$\;
$U_{\ell + 1} \coloneqq Q_1 R_2^{T}, \quad V_{\ell+1} \coloneqq Q_2$
}
\caption{Low-rank AO with QR}\label{Alg:ALS2}
\end{algorithm}
}

\medskip

\noindent The algorithm on the right uses QR decompositions of the factors $U$ and $V$ in order to keep the low-rank representations stable, which is generally advised in practice \changed{as it keeps the $\argmin$ problems better conditioned. This is easily seen for the least squares problems~\eqref{eq: ALS AO} that arise for the squared Frobenius distance: if, e.g., the output matrix $V$ of a previous step is badly conditioned, then the linear operator $U \mapsto U V^T$ is comparably badly conditioned, and the exact solution $U = B V (V^T V)^{-1}$ of the next least squares problem $\min_U \| U V^T - B \|_F$ may be difficult to compute accurately since the matrix $V^T V$ needs to be inverted (assuming it is invertible). If, on the other hand, $V = QR$ is first replaced by its q-factor, $\hat V = Q$, then the next update is just $\hat U = B \hat V$. However, it is easy to check that $U = B Q R^{-T}$ and, therefore, $U V^T = B Q Q^T = \hat U {\hat V}^T$. In other words, in both cases the same matrix is computed, but the second strategy does not require matrix inversion, just the computation of a QR decomposition.}

At first, Algorithm 2 appears considerably harder to analyze than Algorithm 1, which is a plain AO method. A closer inspection, however, reveals that this is not true in the case that the solutions to the minimizations problems are unique (for instance, if all matrices retain rank $k$ and $f$ is strictly convex), since then in both algorithms the same sequences of low-rank matrices $X_\ell = U_\ell V_\ell^T$ are constructed when starting from the same initialization. The underlying reason is that replacing $U$ by its QR-factor $Q$ does not change the column space, and replacing $V$ by its QR-factor does not change the row space. Hence the subspaces of $\R^{m \times n}$ over which the $\argmin$s are taken are the same in both algorithms. The superiority of the `subspace viewpoint' compared to the `representation viewpoint' lies in realizing this \changed{theoretical} equivalence of both algorithms, although numerically they may still behave quite differently.

The above example of low-rank matrix optimization via AO generalizes to the scenario where we are given a multilinear map
\[
\tau \colon V_1 \times \dots \times V_d \to \bV
\]
mapping from $d$ linear spaces $V_1,\dots,V_d$ to a space $\mathbf{V}$, and wish to optimize a function
\begin{equation}\label{eq: multilinear composition}
F(\xi_1,\dots,\xi_d) = f(\tau(\xi_1,\dots,\xi_d)).
\end{equation}
For instance the tasks of computing approximations to tensors in low-rank canonical polyadic (CP), tensor train, or (hierarchical) Tucker formats are of this type; see~\cite{U2012,RU2013,M2013,EHK2015}.

The aim of this paper is to subsume previous local convergence analysis of AO for multilinear optimization~\cite{U2012,RU2013} (see sec.~\ref{sec: conclusion} for an overview on related work) into a transparent theorem that reduces to the subspaces correction method for the linearized problem at a fixed point. Furthermore, in sec.~\ref{sec: block power method} we apply our framework to derive in a new way the (known) convergence rate of a two-sided block power method for computing the dominant $k$-dimensional singular subspaces of a matrix, by relating this power method to AO for the distance function.

\changed{Unfortunately, our techniques are currently not yet in a shape that would allow for substantially new insights in the analysis of alternating least squares for low-rank approximation of higher-order tensors like, e.g., for local convergence analysis of the \emph{higher-order power method} (AO for best rank-one approximation). One reason is that we are lacking an analogous statement to Theorem~\ref{th: convergence rate for identity Hessian} that relates the local contractivity to the spectral radius of a single product of operators. Yet we believe our general setup of AO with moving subspaces will be useful for the tensor case as well in the future. Some references to known results on AO for tensors are given at the end of the paper.} 

\section{Abstract setup}\label{sec: abstract setup}

To generalize our two motivating examples, we consider a $C^1$ function $f \colon \bV \to \bV$ on a Hilbert space $\bV$. To every $\bx \in \bV$ we attach a closed subspace $T(\bx)$ of $\bV$. Further, we assume that we are given a possibly overlapping partition
\[
 T(\bx) = T_1(\bx) + \dots + T_d(\bx)
\]
into $d$ closed subspaces $T_i(\bx)$. Then we define $d$ maps
\[
\bP_i \colon \bV \to \mathcal{L}(\bV),  \quad i=1,\dots,d,
\]
such that for every $\bx \in \bV$ the linear operator $\bP_i(\bx)$ is the orthogonal projection onto the space $T_i(\bx)$. Correspondingly, we let $\bP(\bx)$ be the orthogonal projector on $T(\bx)$.

Next, let $\bS_i$, $i=1,\dots,d$, be (nonlinear) operators on $\bV$ such that $\by = \bS_i(\bx)$ satisfies
%\begin{itemize}
%\item[\upshape (i)]
\begin{equation}\label{eq: definition of S}
\by \in \bx + T_i(\bx), \qquad \bP_i(\bx) \nabla f(\by) = 0.
\end{equation}
%\end{itemize}
It means that $\bS_i$ maps $\bx$ to a relative critical point of $f$ on the hyperplane $\bx + T_i(\bx)$. If, for instance, $f$ is strictly convex and coercive, than such an operator $\bS_i$ is uniquely defined and corresponds to minimizing $f$ on $\bx + T_i(\bx)$. 
	    
AO on moving hyperplanes corresponds to an iteration of the form
\begin{equation}\label{eq: fp iteration}
 \bx_{\ell+1} = \bS(\bx_\ell) \coloneqq (\bS_d \circ \dots \circ \bS_1)(\bx_\ell).
\end{equation}
In the following we consider points $\bar x \in \bV$ which are fixed points of every $\bS_i$, that is,
\[
\bar{\bx} = \bS_i(\bar{\bx}), \quad i =1,\dots,d. 
\]
Then $\bar x$ is obviously a fixed point of $\bS$. \changed{We further note that if $\bar x$ is a fixed point of every $\bS_i$, then $\nabla f(\bar x)$ is orthogonal to $T(\bar x)$. The converse is also true under mild assumptions, for instance, by ensuring that on every hyperplane $x + T_i(x)$ there exists only one point $y$ satisfying~\eqref{eq: definition of S} (e.g., $f$ is strictly convex with bounded sublevel sets), or, if this is not the case, by requiring that $y$~\eqref{eq: definition of S} should be chosen as close as possible to $x$.}

\changed{
In this work, we wish to investigate the local convergence properties of the fixed point iteration~\eqref{eq: fp iteration} under the assumption that all $\bP_i$, all $\bS_i$, and also $\bP$ are continuously (Fr\'echet) differentiable mappings in a neighborhood of $\bar x$. 
Without going into detail, we mention that for optimization tasks in low-rank tensor formats as mentioned in the context of Eq.~\eqref{eq: multilinear composition} such smoothness assumptions are typically ensured if the fixed point has maximal feasible rank. For instance, local analysis of AO for low-rank matrices $UV^T$ as considered in~\eqref{eq: AO for matrices} will require the factors $U$ and $V$ to have full column rank $k$; cf. sec.~\ref{sec: block power method}.}

\changed{
The local contractivity around $\bar x$ is governed by the spectral properties of the derivative $\bS'(\bar x)$. By the chain rule,
\begin{equation}\label{eq: chain rule}
\bS'(\bar x) = \prod_{i=d}^1 \bS'_i(\bar x).
\end{equation}
The derivatives $\bS_i'(\bar{\bx})$ are computed in the next section.} Some preliminary properties, however, are obtained by differentiating the equation
\[
\bP_i(\bx)(\bS_i(\bx) - \bx) = \bS_i(\bx) - \bx.
\]
It gives the relation
\begin{equation*}\label{eq: derivation1}
\bP_i'(\bx;\bh) (\bS_i^{}(\bx) - \bx) + \bP_i^{}(\bx)(\bS_i'(\bx) \bh - \bh) = \bS_i'(\bx)\bh - \bh
\end{equation*}
for all $\bh \in \bV$. Here, $\bP_i'(\bx;\bh) \in \mathcal{L}(\bV)$ denotes the application of the derivative of $\bP_i(\bx)$ at $\bx$ to $\bh$. Hence, in a fixed-point $\bar{\bx} = \bS_i(\bar{\bx})$, it holds
\begin{equation}\label{eq: projection to orthogonal complement}
\changed{\bS_i'(\bar x)h = \bP_i^{}(\bar x) \bS_i'(\bar x)h + (\bI - \bP_i^{}(\bar x))h.}
%(\bI - \bP_i(\bar{\bx})) \bS_i'(\bar{\bx}) \bh = (\bI - \bP_i(\bar{\bx})) \bh.
\end{equation}
This equation is interesting as it shows the following.

\begin{proposition}\label{prop: initial properties of S'}
Assume $\bP_i$ and $\bS_i$ are continuously differentiable around a fixed point $\bar{\bx} = \bS(\bar{\bx})$.
\begin{itemize}
\item[\upshape (i)]
The subspaces $T_i(\bar{\bx})$ \changed{and $T(\bar x)$ are both} invariant subspaces of $\bS_i'(\bar{\bx})$.
\item[\upshape (ii)]
The restriction of $\bS_i'(\bar{\bx})$ to the orthogonal complement $T_i(\bar{\bx})^\bot$ has \changed{all its singular values bounded from below by one, and equals the identity on $T_i(\bar{\bx})^\bot$ if and only if $T_i(\bar{\bx})^\bot$ is also an invariant subspace of $\bS_i'(\bar{\bx})$.}
\changed{
\item[\upshape (iii)] The subspace $T(\bar{\bx})$ is an invariant subspace of $\bS'(\bar{\bx})$, that is, it holds
\[
\bS'(\bar x) \bP(\bar x) = \bP(\bar x) \bS'(\bar x) \bP(\bar x).
\]
The restriction of $\bS'(\bar{\bx})$ to the orthogonal complement $T(\bar{\bx})^\bot$ has \changed{all its singular values bounded from below by one, and equals the identity on $T(\bar{\bx})^\bot$ if and only if $T(\bar{\bx})^\bot$ is also an invariant subspace of $\bS'(\bar{\bx})$.}
}
\end{itemize}
\end{proposition}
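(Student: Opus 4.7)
The plan is to treat all three parts by systematically exploiting the identity~\eqref{eq: projection to orthogonal complement}, which decomposes $\bS_i'(\bar x) h$ into a tangential piece $\bP_i(\bar x)\bS_i'(\bar x)h \in T_i(\bar x)$ and a normal residue $(\bI - \bP_i(\bar x))h$. Each of (i)--(iii) then reduces to a short deduction from this decomposition combined with an orthogonality/Pythagoras argument.

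For (i), restricting~\eqref{eq: projection to orthogonal complement} to $h \in T_i(\bar x)$ kills the residue and yields $T_i(\bar x)$-invariance directly. For $h \in T(\bar x)$, the tangential piece is already in $T_i(\bar x) \subset T(\bar x)$, while the residue $h - \bP_i(\bar x)h$ lies in $T(\bar x)$ because both $h$ and $\bP_i(\bar x)h \in T_i(\bar x) \subset T(\bar x)$ do. For (ii), any $h \in T_i(\bar x)^\bot$ satisfies $\bP_i(\bar x)h = 0$, so \eqref{eq: projection to orthogonal complement} collapses to
\[
\bS_i'(\bar x)h = \bP_i(\bar x)\bS_i'(\bar x)h + h,
\]
a sum of two orthogonal summands. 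Pythagoras gives $\|\bS_i'(\bar x)h\|^2 = \|h\|^2 + \|\bP_i(\bar x)\bS_i'(\bar x)h\|^2 \ge \|h\|^2$, which is the singular-value lower bound. Moreover $\bS_i'(\bar x)h = h$ for all such $h$ is equivalent to $\bP_i(\bar x)\bS_i'(\bar x)h = 0$, which is the same as $\bS_i'(\bar x) h \in T_i(\bar x)^\bot$, i.e., invariance of $T_i(\bar x)^\bot$.

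For (iii), the invariance of $T(\bar x)$ under $\bS'(\bar x)$ is an immediate consequence of applying (i) to every factor in the chain rule~\eqref{eq: chain rule}. To analyze the action on $T(\bar x)^\bot$, I would use that $T(\bar x)^\bot \subset T_i(\bar x)^\bot$ for every $i$, so the conclusion of (ii) applies uniformly: for each $h \in T(\bar x)^\bot$, $\bS_i'(\bar x)h = h + v_i$ with $v_i \in T_i(\bar x) \subset T(\bar x)$. A short induction on the partial compositions $\bS_k'(\bar x) \circ \dots \circ \bS_1'(\bar x)$ then yields $\bS'(\bar x)h = h + w$ with $w \in T(\bar x)$; the inductive step $\bS_{k+1}'(\bar x)(h + w_k) = (h + v_{k+1}) + \bS_{k+1}'(\bar x)w_k$ closes because $\bS_{k+1}'(\bar x)w_k \in T(\bar x)$ by the invariance from~(i). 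Since $h \perp w$, Pythagoras again gives $\|\bS'(\bar x)h\|^2 = \|h\|^2 + \|w\|^2$, from which both the singular-value bound and the equivalence ``identity on $T(\bar x)^\bot$ iff $T(\bar x)^\bot$ invariant'' follow exactly as in (ii).

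The main obstacle is the bookkeeping in (iii): the argument hinges on the strengthened $T(\bar x)$-invariance from (i), not just the individual $T_i(\bar x)$-invariances, because this is what keeps the accumulated ``tangential residue'' trapped inside $T(\bar x)$ across successive applications of the $\bS_i'(\bar x)$. Once that observation is in place, Pythagoras closes the argument with no new ideas.
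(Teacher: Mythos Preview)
Your proof is correct and follows essentially the same approach as the paper: all three parts are derived from the decomposition~\eqref{eq: projection to orthogonal complement} together with orthogonality (Pythagoras), and the $T(\bar x)$-invariance of each $\bS_i'(\bar x)$ is the key to propagating the argument through the composition in~(iii). The only cosmetic difference is that the paper condenses your induction in~(iii) into the operator identity $(\bI - \bar\bP)\bS'(\bar x) = (\bI - \bar\bP)(\bI - \bar\bP_d)\cdots(\bI - \bar\bP_1)$, from which $(\bI - \bar\bP)\bS'(\bar x)(\bI - \bar\bP) = \bI - \bar\bP$ follows at once; your elementwise induction establishes the same fact restricted to $T(\bar x)^\bot$, which is all that is needed.
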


\changed{
\begin{proof}
Ad (i). Obviously, by~\eqref{eq: projection to orthogonal complement}, $h \in T_i(\bar x)$ is mapped to $T_i(\bar x)$. On the other hand, since $T_i(\bar x)$ is a subspace of $T(\bar x)$, the element $(\bI - \bP_i^{}(\bar x))h$ in~\eqref{eq: projection to orthogonal complement} belongs to $T(\bar x)$ for every $h \in T(\bar x)$. Hence $T(\bar x)$ is also an invariant subspace of $\bS_i'(\bar x)$.

Ad (ii). Equation~\eqref{eq: projection to orthogonal complement} shows that $\bS_i'(\bar x) h = h$ for all $h \in T_i(\bar{\bx})^\bot$ if and only if $\bP_i^{}(\bar x) \bS_i'(\bar x)h = 0$ for such $h$, which is equivalent to $T_i(\bar{\bx})^\bot$ being an invariant subspace of $\bS_i'(\bar x)$. In any case, it holds, by orthogonality of both terms in~\eqref{eq: projection to orthogonal complement}, that $\| \bS_i'(\bar x) h \|^2 \ge \| h \|^2$ for all $h \in T_i'(\bar x)^\bot$, which shows that the singular values of the restriction to that space cannot be smaller than one.

Ad (iii). Since $T(\bar x)$ is an invariant subspace of every $\bS_i'(\bar x)$ by (i), it follows from the chain rule~\eqref{eq: chain rule} that it is also an invariant subspace of $\bS'(\bar x)$. Even more, an induction based on~\eqref{eq: projection to orthogonal complement} shows that
\[
(\bI - \bP(\bar x))\bS'(\bar x) = (\bI - \bP(\bar x))(\bI - \bP_d(\bar x)) \cdots (\bI - \bP_1(\bar x)).
\]
So, since the $T_i(\bar x)$ are subspaces of $T(\bar x)$, it holds that
\[
(\bI - \bP(\bar x))\bS'(\bar x)(\bI - \bP(\bar x)) = \bI - \bP(\bar x).
\]
This implies the assertion as in (ii).
\end{proof}
}

\changed{
The proposition shows that we can only hope for contractivity of the map $\bS'(\bar{\bx})$ on its invariant subspace $T(\bar{\bx})$. Therefore, in what follows, we focus on the spectral radius of $\bS'(\bar{\bx}) \bP (\bar x) = \bP(\bar x) \bS'(\bar{\bx}) \bP (\bar x)$. Concerning the convergence of our  fixed point iteration $x_{\ell + 1} = \bS(x_\ell)$, the contractivity of $\bS'(\bar{\bx}) \bP (\bar x)$ turns out to be sufficient for local linear convergence in two notable cases: (i) the subspace $T(\bar x)$ equals the whole space $\bV$ as is the case for classical AO with the subspaces given by~\eqref{eq: subspaces of BCD method}; or (ii) the iterates $(x_\ell)$ lie on a smooth submanifold $\mathcal{M}$ and $T(\bar x)$ is the tangent space to that manifold at $\bar x$. This scenario is often encountered in low-rank matrix and tensor optimization via AO, when the objective function~\eqref{eq: multilinear composition} is considered and the image of the multilinear map $\tau$ is locally a manifold. We will demonstrate this for low-rank matrix approximation in sec.~\ref{sec: block power method}.
}

\subsection{Computation of derivatives}

\changed{We recall that $f \colon \bV \to \R$ is said to be twice continuously (Fr\'echet) differentiable in a neighborhood of $\bar x \in \bV$, if for every $x$ in that neighborhood there exists a bounded linear form $f'(x)$ on $\bV$ and a bounded bilinear form $f''(x)$ on $\bV \times \bV$, which both depend continuously on $x$, such that
\[
f(\bx + \bh) = f(\bx) + f'(x) h + \frac{1}{2} f''(x)(h,h) + o(\| h \|^2).
\]
The bilinear forms $f''(x)$ are necessarily symmetric; see, e.g.,~\cite[section~(8.12.2)]{Dieudonne1969}. Hence, since $\bV$ is a Hilbert space, there exist elements $\nabla f(\bx) \in \bV$ (gradient) and unique bounded self-adjoint operators $\bA(\bx)$ (Hessian) on $\bV$, both depending continuously on $x$, such that
\[
f(\bx + \bh) = f(\bx) + \langle \nabla f(\bx),\bh \rangle + \frac{1}{2} \langle \bh, \bA(\bx) \bh \rangle + o(\| h \|^2)
\]
for all $x$ in a neighborhood of $\bar x$. Note that $x \mapsto \bA(x)$ is the (Fr\'echet) derivative of the map $x \mapsto \nabla f(x)$.} 
For brevity the following shorthand notation will be used for the rest of the paper:
\[
\bar{\bP}_i \coloneqq \bP_i(\bar{\bx}), \qquad \bar{\bP} \coloneqq \bP(\bar{\bx}), \qquad  \bar{\bA} \coloneqq \bA(\bar{\bx}), \qquad  \bar{\bB}_i \coloneqq (\bar{\bP}_i\bar{\bA}\bar{\bP}_i)^{-1}.
\]
\changed{The inverse operator ${\bB}_i$ is obtained by considering $\bar{\bP}_i\bar{\bA}\bar{\bP}_i$ as an operator on $T_i(\bar \bx)$.}

To obtain a formula for $\bS'(\bar{\bx})$, we differentiate each $\bS_i$ separately. The derivatives $\bS_i'(\bar \bx)$ are given as follows.

\begin{proposition}\label{prop: derivative of S}
Assume that $\bP_i$ and $\bS_i$ are continuously differentiable in a neighborhood of a fixed point $\bar{\bx}$, and that $f$ is twice continuously differentiable around $\bar{\bx}$. \changed{Then $\bP_i'(\bar{\bx};\bh) \nabla f(\bar{\bx}) \in T_i(\bar{\bx})$.}
%Let $\bar{\bA} = \nabla^2 f(\bar{\bx})$ be the Hessian at $\bar{\bx}$, and $\bar{\bP}_i = \bP_i(\bar{\bx})$.
If the linear operator $\bar{\bP}_i \bar{\bA}\bar{\bP}_i$ is invertible on $T_i(\bar{\bx})$, then%, with $\bar{\bB}_i = (\bar{\bP}_i\bar{\bA}\bar{\bP}_i)^{-1}$,
\begin{align}
\bS_i'(\bar{\bx})\bh &= \bh - \bar{\bB}_i^{} \bar{\bP}_i^{} \bar{\bA}\bh - \bar{\bB}_i^{} \bP_i'(\bar{\bx};\bh) \nabla f(\bar{\bx}) \label{eq: S' 1}.
\end{align}
In particular,
\[
\bS_i'(\bar{\bx}) = -\bar{\bB}_i^{} \bP_i'(\bar{\bx};\bh) \nabla f(\bar{\bx}) \quad \text{on $T_i(\bar{\bx})$.}
\]
\end{proposition}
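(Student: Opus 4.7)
The plan is to extract two linear identities by differentiating the two defining equations
\[
\bP_i(\bx)(\bS_i(\bx) - \bx) = \bS_i(\bx) - \bx, \qquad \bP_i(\bx) \nabla f(\bS_i(\bx)) = 0
\]
of $\bS_i$ at the fixed point $\bar{\bx}$, and then to solve the resulting linear system for $\bS_i'(\bar{\bx})\bh$ by inverting the reduced Hessian $\bar{\bP}_i \bar{\bA}\bar{\bP}_i$ on the subspace $T_i(\bar{\bx})$.

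I would first establish the auxiliary claim that $\bP_i'(\bar{\bx};\bh)\nabla f(\bar{\bx}) \in T_i(\bar{\bx})$. The idea is to differentiate the idempotency relation $\bP_i(\bx)^2 = \bP_i(\bx)$ at $\bar{\bx}$, which yields
\[
\bP_i'(\bar{\bx};\bh)\bar{\bP}_i + \bar{\bP}_i \bP_i'(\bar{\bx};\bh) = \bP_i'(\bar{\bx};\bh).
\]
Applied to any $v \in T_i(\bar{\bx})^\bot$, the first term on the left vanishes, so $\bP_i'(\bar{\bx};\bh)v = \bar{\bP}_i \bP_i'(\bar{\bx};\bh)v \in T_i(\bar{\bx})$. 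Since $\bar{\bx}$ is a fixed point of $\bS_i$, the condition~\eqref{eq: definition of S} gives $\bar{\bP}_i \nabla f(\bar{\bx}) = 0$, i.e., $\nabla f(\bar{\bx}) \in T_i(\bar{\bx})^\bot$, and the claim follows. This step is essential, because it guarantees that the right-hand side of the linear system for $\bS_i'(\bar{\bx})\bh$ will lie in the range of $\bar{\bP}_i \bar{\bA}\bar{\bP}_i$.

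Next, I would differentiate the two defining equations at $\bar{\bx}$. The first, using $\bS_i(\bar{\bx}) - \bar{\bx} = 0$, recovers~\eqref{eq: projection to orthogonal complement} and gives $\bS_i'(\bar{\bx})\bh - \bh \in T_i(\bar{\bx})$. Differentiating the second, and recalling that $\bA$ is the derivative of $\nabla f$, yields
\[
\bP_i'(\bar{\bx};\bh)\nabla f(\bar{\bx}) + \bar{\bP}_i \bar{\bA}\, \bS_i'(\bar{\bx})\bh = 0.
\]
Writing $\bS_i'(\bar{\bx})\bh = \bh + v$ with $v \in T_i(\bar{\bx})$, substituting, and using $v = \bar{\bP}_i v$, this becomes
\[
\bar{\bP}_i \bar{\bA}\bar{\bP}_i v = -\bar{\bP}_i \bar{\bA}\bh - \bP_i'(\bar{\bx};\bh)\nabla f(\bar{\bx}),
\]
where both terms on the right lie in $T_i(\bar{\bx})$ thanks to the auxiliary claim. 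Applying $\bar{\bB}_i$ then solves for $v$ and produces the formula~\eqref{eq: S' 1}. The restriction-to-$T_i(\bar{\bx})$ statement finally follows from the observation that for $\bh \in T_i(\bar{\bx})$ one has $\bar{\bB}_i \bar{\bP}_i \bar{\bA}\bh = \bh$, so the first two terms cancel.

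The main obstacle is bookkeeping: one must carefully verify that every quantity to which $\bar{\bB}_i$ is applied really lies in $T_i(\bar{\bx})$, since $\bar{\bB}_i$ is only defined as the inverse of $\bar{\bP}_i \bar{\bA}\bar{\bP}_i$ on that subspace. The auxiliary observation $\bP_i'(\bar{\bx};\bh)\nabla f(\bar{\bx}) \in T_i(\bar{\bx})$ is precisely the subtle input that makes this well-defined, and its proof hinges on the projection identity $\bP_i^2 = \bP_i$ together with the first-order optimality at the fixed point.
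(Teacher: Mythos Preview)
Your proof is correct and follows essentially the same route as the paper: differentiate the two defining relations of $\bS_i$ at the fixed point, use~\eqref{eq: projection to orthogonal complement} to see that $\bS_i'(\bar{\bx})\bh - \bh \in T_i(\bar{\bx})$, and then invert $\bar{\bP}_i \bar{\bA}\bar{\bP}_i$ on $T_i(\bar{\bx})$. The only minor difference is how the auxiliary fact $\bP_i'(\bar{\bx};\bh)\nabla f(\bar{\bx}) \in T_i(\bar{\bx})$ is obtained: you derive it a priori from the idempotency relation $\bP_i^2 = \bP_i$, whereas the paper reads it off a posteriori from the differentiated equation $\bar{\bP}_i\bar{\bA}\bar{\bP}_i \bS_i'(\bar{\bx})\bh = -\bar{\bP}_i\bar{\bA}(\bI - \bar{\bP}_i)\bh - \bP_i'(\bar{\bx};\bh)\nabla f(\bar{\bx})$, since the other two terms visibly lie in $T_i(\bar{\bx})$.
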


%Note that it follows from~\eqref{eq: derivation1} and~\eqref{eq: projection to orthogonal complement}, that for any $\bh \in \bV$ the linear operator $\bP_i'(\bar{\bx};\bh)$ maps into the space $T_i(\bar{\bx})$. Hence the composition of $\bar{\bB}_i$ with this operator is well defined.

\begin{proof}
Differentiating the equation $\bP_i(\bx) \nabla f(\bS_i(\bx)) = 0$ yields
\begin{equation}\label{eq: derivation 1}
\bP_i'(\bx;\bh) \nabla f(\changedtwo{\bS_i(\bx)}) + \bP_i^{}(\bx) \bA(\bx) \bS_i'(\bx) \bh = 0
\end{equation}
for all variations $\bh \in \bV$. 
%, where we use
%\(
%\bA(\bx) = \nabla^2 f(\bx)
%\)
%to denote the Hessian of $f$ at $\bx$.
Splitting the term of interest $\bS_i'(\bx)\bh$ in~\eqref{eq: derivation 1} into its parts on $T_i(\bar{\bx})$ and the orthogonal complement, we get
\[
\bP_i^{}(\bx)\bA(\bx)\bP_i^{}(\bx) \bS_i'(\bx) \bh = - \bP_i^{}(\bx)\bA(\bx)(\bI - \bP_i^{}(\bx))\bS_i'(\bx)\bh  - \bP_i'(\bx;\bh) \nabla f(\changedtwo{\bS_i(\bx)}).
\]
At a fixed point, we can use~\eqref{eq: projection to orthogonal complement}. Therefore
\[
\bar{\bP}_i^{}\bar{\bA}\bar{\bP}_i^{} \bS_i'(\bar{\bx}) \bh = - \bar{\bP}_i^{}\bar{\bA}(\bI - \bar{\bP}_i^{})\bh - \bP_i'(\bar{\bx};\bh) \nabla f(\bar{\bx}).
\]
\changed{This equation shows that $\bP_i'(\bar{\bx};\bh) \nabla f(\bar{\bx})$ lies in $T_i(\bar x)$.} Assuming further that $\bar{\bP}_i\bar{\bA}\bar{\bP}_i$ has an inverse $\bar{\bB}_i$ on $T_i(\bar{\bx})$, we get
\[
 \bar{\bP}_i^{} \bS_i'(\bar{\bx}) \bh = \bar{\bP}_i^{}\bh - \bar{\bB}_i^{} \bar{\bP}_i^{} \bar{\bA}\bh - \bar{\bB}_i^{} \bP_i'(\bar{\bx};\bh) \nabla f(\bar{\bx}).
\]
Using~\eqref{eq: projection to orthogonal complement} once more, one arrives at
\begin{align*}
 \bS_i'(\bar{\bx}) \bh &= (\bI - \bar{\bP}_i^{})\bS_i'(\bar{\bx})\bh + \bar{\bP}_i^{} \bS_i'(\bar{\bx}) \bh 
 = (\bI - \bar{\bP}_i^{})\bh + \bar{\bP}_i^{}\bh - \bar{\bB}_i^{} \bar{\bP}_i^{} \bar{\bA}\bh - \bar{\bB}_i^{} \bP_i'(\bar{\bx};\bh) \nabla f(\bar{\bx}),
\end{align*}
which is~\eqref{eq: S' 1}. %To get~\eqref{eq: \bS' 2}, we note that $\bar{\bB}\bar{\bP}\bar{\bA}\bar{\bP} \bh = \bar{\bP} \bh$, and $\bP'(\bar{\bx};\bar{\bP} \bh) = 0$ since $P$ is constant on the hyperplane $E(\bar{\bx})$.
\end{proof}

It will be useful to simplify notation. We denote
\begin{equation}\label{eq: A-orthogonal projections}
\bar{\bP}_i^{\bar{\bA}} \coloneqq \bar{\bB}_i^{} \bar{\bP}_i^{} \bar{\bA}.
\end{equation}
If $\bar{\bA}$ is a positive definite operator, then $\bar{\bB}_i$ is always well defined and $\bar{\bP}_i^{\bar{\bA}}$ allows an interpretation as the $\bar{\bA}$-orthogonal projection onto subspace $T_i(\bar{\bx})$, that is, an orthogonal projection with respect to the inner product $(\bx,\by) \mapsto \langle \bx, \bar{\bA} \by \rangle$.\footnote{To see it, observe (we omit the subscript $i$) that $\bar{\bB} \bar{\bP} = \bar{\bP} \bar{\bB} \bar{\bP}$ is self-adjoint and therefore $\langle \bar{\bP}^{\bar{\bA}} \bx, \bar{\bA} (\bI - \bar{\bP}^{\bar{\bA}})\bx \rangle = \langle \bar{\bB} \bar{\bP} \bar{\bA} x, (\bar{\bA} - \bar{\bA} \bar{\bB} \bar{\bP} \bar{\bA})x \rangle = \langle   x, (\bar{\bA}\bar{\bB} \bar{\bP}\bar{\bA} - \bar{\bA}\bar{\bB} \bar{\bP}\bar{\bA} \bar{\bB} \bar{\bP} \bar{\bA})x \rangle = 0$ for all $\bx \in \bV$, since $\bar{\bP} \bar{\bA} \bar{\bB} \bar{\bP} = \bar{\bP} \bar{\bA} \bar{\bP} \bar{\bB} \bar{\bP} = \bar{\bP}$. Hence $\bar{\bP}^{\bar{\bA}}\bx$ is $\bar{\bA}$-orthogonal to $(\bI - \bar{\bP}^{\bar{\bA}})\bx$.}

Further, we define the linear operator $\bar{\bN}_i$ on $\bV$ such that
\begin{equation}\label{eq: operator N}
\bar{\bN}_i^{} \bh \coloneqq \bP_i'(\bar{\bx};\bh) \nabla f(\bar{\bx})
\end{equation}
for all $\bh$. With this notation, and under the assumptions of Proposition~\ref{prop: derivative of S}, $\bS_i'(\bar{\bx})$ can be conveniently written as
\[
\bS_i'(\bar{\bx}) = (\bI - \bar{\bP}_i^{\bar{\bA}}) - \bar{\bB}_i\bar{\bN}_i.
\]

The formula for $\bS'(\bar x)$ is now obtained by the chain rule. For later reference we formulate it as a theorem.

\begin{theorem}\label{thm: derivative of S}
Assume that all $\bP_i$ and $\bS_i$ are continuously differentiable in a neighborhood of a fixed point $\bar{\bx}$, and that $f$ is twice continuously differentiable around $\bar{\bx}$.  
Assume all $\bar{\bB}_i = (\bar{\bP}_i \bar{\bA}\bar{\bP}_i)^{-1}$ exist on $T_i(\bar{\bx})$. Then
\begin{equation}\label{eq: derivative of S}
\bS'(\bar \bx) = \prod_{i=d}^1 \bS_i'(\bar \bx) = \prod_{i=d}^1 [ (\bI - \bar{\bP}_i^{\bar{\bA}}) - \bar{\bB}_i\bar{\bN}_i].
\end{equation}
\end{theorem}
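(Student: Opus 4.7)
The plan is to combine the chain rule with the single-step derivative formula already established in Proposition~\ref{prop: derivative of S}, so the actual work is essentially bookkeeping. The first equality in~\eqref{eq: derivative of S}, namely $\bS'(\bar x) = \prod_{i=d}^{1} \bS_i'(\bar x)$, is just the chain rule applied to the composition $\bS = \bS_d \circ \cdots \circ \bS_1$. The key observation that justifies this evaluation at the single point $\bar x$ is that $\bar x$ is, by assumption, a common fixed point of all the $\bS_i$: when we differentiate $\bS_d \circ \bS_{d-1} \circ \cdots \circ \bS_1$ at $\bar x$, each intermediate point $\bS_i(\bS_{i-1}(\cdots \bS_1(\bar x)))$ is again $\bar x$, so all the derivative factors are evaluated at $\bar x$. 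The smoothness hypotheses on the $\bS_i$ in a neighborhood of $\bar x$ guarantee that the chain rule for Fr\'echet derivatives is applicable.

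For the second equality, I would substitute the formula from Proposition~\ref{prop: derivative of S}, namely
\[
\bS_i'(\bar x) \bh = \bh - \bar{\bB}_i \bar{\bP}_i \bar{\bA} \bh - \bar{\bB}_i \bP_i'(\bar x; \bh) \nabla f(\bar x),
\]
and then merely rewrite it in the shorthand notation introduced in~\eqref{eq: A-orthogonal projections} and~\eqref{eq: operator N}. By definition $\bar{\bP}_i^{\bar{\bA}} = \bar{\bB}_i \bar{\bP}_i \bar{\bA}$ and $\bar{\bN}_i \bh = \bP_i'(\bar x; \bh) \nabla f(\bar x)$, so the formula becomes $\bS_i'(\bar x) = (\bI - \bar{\bP}_i^{\bar{\bA}}) - \bar{\bB}_i \bar{\bN}_i$. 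Plugging this into the chain-rule product gives the result. The hypotheses that $\bar{\bB}_i$ exists on $T_i(\bar x)$ for every $i$ and that $f$ is twice continuously differentiable are exactly what Proposition~\ref{prop: derivative of S} needs, so nothing extra must be checked.

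Since everything reduces to the chain rule and a notational substitution, I do not anticipate any real obstacle. If there is a subtlety at all, it is the verification that $\bar{\bN}_i$ is indeed a well-defined bounded linear operator on $\bV$ — but this is immediate because $\bh \mapsto \bP_i'(\bar x; \bh)$ is a bounded linear map into $\mathcal{L}(\bV)$ (by continuous Fr\'echet differentiability of $\bP_i$) and it is then applied to the fixed vector $\nabla f(\bar x)$. Hence the proof consists of one sentence invoking the chain rule justified by the common fixed-point property, followed by one sentence rewriting Proposition~\ref{prop: derivative of S} with the shorthand notation.
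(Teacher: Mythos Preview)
Your proposal is correct and matches the paper's approach exactly: the paper likewise obtains the first equality from the chain rule~\eqref{eq: chain rule} (using that $\bar x$ is a common fixed point of all $\bS_i$) and the second equality by rewriting Proposition~\ref{prop: derivative of S} in the shorthand notation $\bar{\bP}_i^{\bar{\bA}}$ and $\bar{\bN}_i$. In fact the paper does not even give a formal proof of this theorem, simply stating that it follows from the chain rule after the preceding discussion.
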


\subsection{Curvature free cases ($\bar{\bN}_i = 0$)}\label{sec: model problems}

An easy case to investigate is when all $\bar{\bN}_i = 0$, since in this case we obtain the formula
\[
\bS'(\bar \bx) = \prod_{i=d}^1 (\bI - \bar{\bP}_i^{\bar{\bA}}),
\]
which is well known from the theory of subspace correction methods for the solution of linear systems, specifically the \emph{multiplicative Schwarz method}. The following statement is obtained from the standard results on the multiplicative Schwarz method \changed{(see, e.g.,~\cite[Theorem~4.2]{XuZikatanov2002} for the Hilbert space case), by restricting everything to the subspace $T(\bar{x})$ and considering the equivalent $\bar \bA$-inner-product $(\bx,\by)_{\bar \bA} = (\bx, \bar \bA \by )$. We recall that all subspaces $T_i(x)$ and $T(x)$ have been assumed to be closed, which is important; cf. Theorem~4.6 in~\cite{XuZikatanov2002}.}

\begin{theorem}\label{thm: linear convergence curvature free}
Assume all $\bar{\bN}_i = 0$ and $\bar{\bA}$ is positive definite on $T(\bar \bx)$.\footnote{It means that there exists $m > 0$ such that $\langle \bx, \bar \bA \bx \rangle \ge m \| \bx \|^2$ for all $\bx \in T(\bar \bx)$.} Then $\| \bar \bP \bS'(\bar x) \bar \bP \|_{\bar \bA} < 1$. In particular, $\rho(\bS'(\bar x) \bar \bP) = \rho(\bar \bP \bS'(\bar x) \bar \bP) < 1$. 
\end{theorem}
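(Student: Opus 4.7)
The plan is to recognize that, under the assumption $\bar{\bN}_i = 0$, formula~\eqref{eq: derivative of S} reduces to
\[
\bS'(\bar \bx) = \prod_{i=d}^{1} \bigl(\bI - \bar{\bP}_i^{\bar{\bA}}\bigr),
\]
which is precisely the error propagation operator of a multiplicative Schwarz method in the energy inner product $(\bx,\by)_{\bar{\bA}} \coloneqq \langle \bx, \bar{\bA}\by \rangle$. The proof then amounts to a direct invocation of the standard contractivity estimate for that method, applied to the closed subspace $T(\bar \bx) = T_1(\bar \bx) + \dots + T_d(\bar \bx)$.

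For the setup: since $\bar{\bA}$ is positive definite on $T(\bar \bx)$, the form $(\cdot,\cdot)_{\bar{\bA}}$ is an inner product on $T(\bar \bx)$ equivalent to the ambient one, making $T(\bar \bx)$ a Hilbert space under it. As noted after~\eqref{eq: A-orthogonal projections}, each $\bar{\bP}_i^{\bar{\bA}}$ then acts as the $\bar{\bA}$-orthogonal projection onto the closed subspace $T_i(\bar \bx) \subseteq T(\bar \bx)$. By Proposition~\ref{prop: initial properties of S'}(iii), we have $\bS'(\bar \bx)\bar{\bP} = \bar{\bP}\bS'(\bar \bx)\bar{\bP}$, so the operator of interest vanishes on $T(\bar \bx)^\bot$ and its restriction to $T(\bar \bx)$ coincides with the product $\prod_{i=d}^1(\bI - \bar{\bP}_i^{\bar{\bA}})$ viewed as an endomorphism of $T(\bar \bx)$.

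Next, I would invoke~\cite[Theorem~4.2]{XuZikatanov2002} in the Hilbert space $(T(\bar \bx), (\cdot,\cdot)_{\bar{\bA}})$; its hypotheses are satisfied, because $T(\bar \bx)$ is a Hilbert space under this inner product, the subspaces $T_i(\bar \bx)$ are closed (by the standing assumption on the $T_i$), and by definition they sum to $T(\bar \bx)$. The conclusion is that the $\bar{\bA}$-norm of the product on $T(\bar \bx)$ is strictly less than one. Combined with the vanishing on $T(\bar \bx)^\bot$, this gives $\|\bar{\bP}\bS'(\bar \bx)\bar{\bP}\|_{\bar{\bA}} < 1$.

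For the spectral-radius statement, the identity $\rho(\bS'(\bar \bx)\bar{\bP}) = \rho(\bar{\bP}\bS'(\bar \bx)\bar{\bP})$ is immediate from $\bS'(\bar \bx)\bar{\bP} = \bar{\bP}\bS'(\bar \bx)\bar{\bP}$, and the spectral radius is bounded by any induced operator norm (and is independent of the choice of equivalent norm), so both are at most $\|\bar{\bP}\bS'(\bar \bx)\bar{\bP}\|_{\bar{\bA}} < 1$. The only potentially delicate point is matching the abstract setup to that of the cited multiplicative Schwarz theorem; the closedness of the $T_i(\bar \bx)$ — emphasized in the excerpt just before the theorem — is the key technical assumption that makes the $\bar{\bA}$-orthogonal projections $\bar{\bP}_i^{\bar{\bA}}$ bounded and the contractivity estimate applicable.
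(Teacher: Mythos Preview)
Your proposal is correct and follows essentially the same approach as the paper: the paper likewise observes that when $\bar{\bN}_i = 0$ the derivative reduces to the multiplicative Schwarz error operator, restricts to $T(\bar\bx)$ with the $\bar{\bA}$-inner product, and invokes \cite[Theorem~4.2]{XuZikatanov2002}, noting the importance of closedness of the $T_i(\bar\bx)$. Your write-up is slightly more explicit about the use of Proposition~\ref{prop: initial properties of S'}(iii) and the spectral-radius argument, but the substance is identical.
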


The case $\bar{\bN}_i = 0$ considered here arises in two notable cases.

\subsubsection{Locally constant subspaces}\label{sec: locally constant subspaces}

If the subspaces $T_i(\bx)$ are the same for all $\bx$ in a neighborhood of $\bar \bx$, then $\bP_i'(\bar \bx) = 0$. This case occurs in the classical nonlinear Gauss--Seidel method discussed in the introduction, \changed{where the subspaces $T_i$ are fixed and do not depend on $x$ at all.} Hence, in this case Theorem~\ref{thm: linear convergence curvature free} simply recovers the well-known fact that the local convergence rate of the nonlinear (block) Gauss-Seidel method equals the rate of the linear block Gauss-Seidel method with the Hessian as the system matrix; cf., e.g.,~\cite{OR1970}.

\subsubsection{Zero gradient}\label{sec: zero gradient}

The operators $\bar{\bN}_i$ are also zero in fixed points satisfying $\nabla f(\bar \bx) = 0$. 
An interesting scenario for this situation is low-rank optimization where an unconstrained critical point lies on a considered manifold of low-rank matrices or tensors. This scenario is presented in section~\ref{sec: block power method}; see, in particular, Lemma~\ref{lem: derivatives of projections}.

\subsection{A nontrivial example including curvature ($\bar{\bN}_i \neq 0$)}

A case with $\bar{\bN}_i \neq 0$, but allowing for considerable simplification, is obtained for $d=2$ when $T(\bar \bx) = T_1(\bar \bx) + T_2(\bar \bx)$ can be decomposed into its intersection and two other $\bar{\bA}$-orthogonal parts. This case occurs for problems of low-rank best approximation. In these cases, $f$ is a quadratic function with Hessian equal to identity matrix: $\bar{\bA} = \bI$; see sec.~\ref{sec: block power method} below.

\begin{theorem}\label{th: convergence rate for identity Hessian}
 In addition to the assumptions of Theorem~\ref{thm: derivative of S}, suppose the following two conditions hold:
 \begin{itemize}
  \item[\upshape (i)] $\bar \bP_1^{\bar \bA}$ and $\bar \bP_2^{\bar \bA}$ commute,\footnote{This condition is equivalent to the fact that the $\bar \bA$-orthogonal projector on $T(\bar x)$ allows the two decompositions
\(
\bar \bP^{\bar \bA} = \bar \bP_1^{\bar \bA} + \bar \bP_2^{\bar \bA} - \bar \bP_2^{\bar \bA} \bar \bP_1^{\bar \bA}
\)
and
\(
\bar \bP^{\bar \bA} = \bar \bP_1^{\bar \bA} + \bar \bP_2^{\bar \bA} - \bar \bP_1^{\bar \bA} \bar \bP_2^{\bar \bA}.
\)}
  \item[\upshape (ii)] $\bar{\bN}_i = 0$ on $T_i(\bar \bx)$ for $i = 1,2$.
 \end{itemize}
Then
\[
  \rho(\bS'(\bar \bx) \bar{\bP}) = \rho(\bar{\bB}_2 \bar{\bN}_2 \bar{\bB}_1 \bar{\bN}_1 \bar{\bP}).
\]
\end{theorem}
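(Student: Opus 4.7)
The plan is to expand the chain rule formula from Theorem~\ref{thm: derivative of S} with $d=2$,
\[
\bS'(\bar \bx) \bar \bP = \bigl[(\bI - \bar \bP_2^{\bar \bA}) - \bar \bB_2 \bar \bN_2\bigr]\bigl[(\bI - \bar \bP_1^{\bar \bA}) - \bar \bB_1 \bar \bN_1\bigr]\bar \bP,
\]
into its four terms and then eliminate or identify each of them under hypotheses (i) and (ii). The ``pure projector'' term vanishes immediately: by (i) one has $(\bI - \bar \bP_2^{\bar \bA})(\bI - \bar \bP_1^{\bar \bA}) = \bI - \bar \bP^{\bar \bA}$, and since $\bar \bP^{\bar \bA}$ fixes every element of $T(\bar \bx)$ (the range of $\bar \bP$), this product composed with $\bar \bP$ is zero.

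Next I would introduce the $\bar \bA$-orthogonal decomposition $T(\bar \bx) = T_0 \oplus_{\bar \bA} \tilde T_1 \oplus_{\bar \bA} \tilde T_2$, where $T_0 = T_1(\bar \bx) \cap T_2(\bar \bx)$ and $\tilde T_i$ is the $\bar \bA$-orthogonal complement of $T_0$ inside $T_i(\bar \bx)$; this decomposition is available precisely thanks to (i). In this splitting, the restriction to $T(\bar \bx)$ of $(\bI - \bar \bP_1^{\bar \bA})$ equals the $\bar \bA$-orthogonal projection onto $\tilde T_2$, and that of $(\bI - \bar \bP_2^{\bar \bA})$ equals the one onto $\tilde T_1$. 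Combining hypothesis (ii) with the observation from Proposition~\ref{prop: derivative of S} that $\bar \bN_i$ takes values in $T_i(\bar \bx)$, one reads off a sharp sparsity pattern: the operator $K_1 := \bar \bB_1 \bar \bN_1$ vanishes on $T_0 \oplus \tilde T_1$ and takes values in $T_1(\bar \bx) = T_0 \oplus \tilde T_1$, while $K_2 := \bar \bB_2 \bar \bN_2$ vanishes on $T_0 \oplus \tilde T_2$ and takes values in $T_0 \oplus \tilde T_2$. In particular, the cross term $\bar \bB_2 \bar \bN_2 (\bI - \bar \bP_1^{\bar \bA}) \bar \bP$ vanishes, because $(\bI - \bar \bP_1^{\bar \bA})\bar \bP$ maps into $\tilde T_2 \subseteq T_2(\bar \bx)$, on which $\bar \bN_2 = 0$.

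What remains is $\bS'(\bar \bx)\bar \bP|_{T(\bar \bx)} = -(\bI - \bar \bP_2^{\bar \bA}) K_1 + K_2 K_1$. Writing both this operator and $(K_2 K_1)|_{T(\bar \bx)}$ as $3 \times 3$ block matrices with respect to $T_0 \oplus \tilde T_1 \oplus \tilde T_2$, both turn out to have only the third block column nonzero and to share the same bottom-right diagonal block, namely the $\tilde T_2 \to \tilde T_2$ component of $K_2 K_1$. A standard computation with such ``nearly nilpotent'' block-triangular operators (e.g., comparing norms of iterated powers) shows that their spectral radii coincide, both equaling the spectral radius of that shared diagonal block. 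Transferring back from the restriction to the full-space operators $\bS'(\bar \bx)\bar \bP$ and $\bar \bB_2 \bar \bN_2 \bar \bB_1 \bar \bN_1 \bar \bP$ on $\bV$ costs only a possible extra zero eigenvalue, since $\bar \bP$ annihilates $T(\bar \bx)^\bot$ and $T(\bar \bx)$ is invariant under both.

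The main obstacle will be the careful bookkeeping needed to juggle the standard inner product (in which $\bar \bP$ is orthogonal) with the $\bar \bA$-inner product (in which the $\bar \bP_i^{\bar \bA}$ are orthogonal), and to verify that hypotheses (i)--(ii) really produce the claimed block sparsity pattern of $K_1$, $K_2$ and of $\bS'(\bar \bx)|_{T(\bar \bx)}$. Once that structure is nailed down, the spectral radius identity reduces to the standard fact that a block upper triangular operator has spectral radius equal to the maximum spectral radius of its diagonal blocks.
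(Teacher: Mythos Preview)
Your proof is correct, but it follows a different and more computational route than the paper's. The paper never introduces the three-block decomposition $T_0 \oplus_{\bar\bA} \tilde T_1 \oplus_{\bar\bA} \tilde T_2$; instead it argues in two quick strokes using the cyclic invariance $\rho(AB)=\rho(BA)$. First, since $(\bI-\bar\bP_1^{\bar\bA})\bar\bP$ maps into $T_2(\bar x)$ and $\bS_2'(\bar x)=(\bI-\bar\bP_2^{\bar\bA})-\bar\bB_2\bar\bN_2$ vanishes there (both summands do), one gets directly $\bS'(\bar x)\bar\bP = -\bigl[(\bI-\bar\bP_2^{\bar\bA})-\bar\bB_2\bar\bN_2\bigr]\,\bar\bB_1\bar\bN_1\bar\bP$. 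Then, after inserting a harmless $\bar\bP$, the paper swaps the two factors, uses the symmetric fact that $(\bI-\bar\bP_2^{\bar\bA})\bar\bP$ maps into $T_1(\bar x)$ where $\bar\bN_1$ vanishes, and swaps back. This yields the result in three lines with no block matrices at all.

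Your approach trades that trick for an explicit structural picture: once the $3\times 3$ block forms of $K_1$, $K_2$, $(\bI-\bar\bP_2^{\bar\bA})K_1$ and $K_2K_1$ are written down, the equality of spectral radii becomes the elementary fact that an operator with a single nonzero block column has spectral radius equal to that of its diagonal block. The advantage is transparency --- one literally sees which component of $K_2K_1$ governs the rate --- at the cost of more bookkeeping (and the inner-product juggling you flag). The paper's $\rho(AB)=\rho(BA)$ argument is shorter and avoids choosing a basis or decomposition, but is less revealing about where the contraction actually lives.
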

\begin{proof}
When $\bar \bP_1^{\bar \bA}$ and $\bar \bP_2^{\bar \bA}$ commute, it is easily verified that the operator $(\bI - \bar{\bP}_1^{\bar{\bA}}) \bar \bP$ maps to $T_2(\bar \bx)$. By~\eqref{eq: derivative of S} and assumption (ii), it then holds that
 \[
   \bS'(\bar \bx) \bar \bP = -(\bI - \bar{\bP}_2^{\bar{\bA}}  - \bar{\bB}_2 \bar{\bN}_2) \bar{\bB}_1 \bar{\bN}_1 \bar \bP = -(\bI - \bar{\bP}_2^{\bar{\bA}}  - \bar{\bB}_2 \bar{\bN}_2) \bar \bP \bar{\bB}_1 \bar{\bN}_1 \bar \bP.
 \]
It is a well-known fact that the spectral radius of the product of two operators is invariant under the order of factors. Thus, by the above formula, the spectral radius of $\bS'(\bar \bx) \bar{\bP}$ is the same as the spectral radius of $-\bar{\bB}_1 \bar{\bN}_1 \bar \bP (\bI - \bar{\bP}_2^{\bar{\bA}}  - \bar{\bB}_2 \bar{\bN}_2) \bar \bP = \bar{\bB}_1 \bar{\bN}_1 \bar \bP \bar{\bB}_2 \bar{\bN}_2 \bar \bP$. Here we have used that $(\bI - \bar{\bP}_2^{\bar{\bA}}) \bar \bP$ maps to $T_1(\bar \bx)$ by (i). Changing the order of factors again, we obtain the result.
\end{proof}

\begin{remark}\label{rem: quadratic convergence}
An even stronger result is obviously obtained when again $\bar{\bN}_i = 0$ on the whole space $T(\bar \bx)$ as in sec.~\ref{sec: model problems}. Then $\bS'(\bar \bx) \bar \bP = 0$ and we expect superlinear convergence (given sufficient smoothness of $\bS$). This happens, for instance, when $\nabla f(\bar \bx) = 0$. If, additionally, $f$ is quadratic, then the sequential solution of $\bP_i(\bar \bx) \nabla f(y) = 0$ on $T_i(\bar \bx)$ provides a critical point on the whole space $T(\bar \bx)$ after only one sweep through $i=1,2$ (due to orthogonal residuals). Of course, the condition (i) in Theorem~\ref{th: convergence rate for identity Hessian} is very strong when $\bar{\bA}$ is not the identity operator, as it implies that we are given a possibly overlapping, but otherwise $\bar{\bA}$-orthogonal splitting of the space $T(\bar \bx)$. 
\end{remark}

\section{AO for low-rank matrices}\label{sec: block power method}

We return to the AO method~\eqref{eq: AO for matrices} for solving the problem
\begin{equation}\label{eq: nonsmooth low rank}
\min_{\rank(X) \le k} f(X)
\end{equation}
for a function $f \colon \R^{m \times n} \to \R$, as outlined in the introduction. We first give an overview oh what the abstract setup developed above looks like in this case. We then deal with the alternating least squares method for quadratic functions $f$, and its relation to power iterations in the case that the Hessian of the function is the identity operator. {\changed{By $\langle X, Y \rangle_{F} = \sum_{i,j} x_{ij} y_{ij}$ we denote the Frobenius inner product of two matrices, and by $\| X \|_F$ the corresponding induced norm.}

Starting from an initial guess $X_0 = U_0^{} V_0^T$ of rank $k$, the method produces a sequence $X_{\ell}^{} = U_{\ell}^{} V_\ell^T$ of matrices of rank at most $k$ by minimizing the function $f(UV^T)$ with respect to $U$ and $V$ only in an alternating manner. As long as the matrices $U_\ell$ and $V_\ell$ remain of rank $k$, this method is equivalently described as AO on the varying subspaces defined in~\eqref{eq: subspace 1} and~\eqref{eq: subspace 2}.\footnote{When the rank drops, some formal subtleties appear. In the alternating subspace method the rank can only decrease, but never increase again, whereas in the AO method for $U$ and $V$ the size of the blocks is not changed, and even if, say, $U$ with rank less than $k$ is fixed, the minimizer for $V$ is then not unique and a full-rank matrix $V$ could be selected.} Using the projections
\begin{equation}\label{eq: low-rank projections}
Z \mapsto \bP_1(X)[Z] \coloneqq Z X^+X, \qquad Z \mapsto \bP_2(X)[Z] \coloneqq XX^+Z,\footnote{Note that different from previous notation in $\R^n$ we now use square brackets in $\bP(X)[Z]$ to describe the linear action of $\bP(X)$ on $Z$ in order to avoid confusion with matrix multiplication.}
\end{equation}
these subspaces can also be written as 
\begin{equation*}
\begin{gathered}
T_1(X) = \{ Y \in \R^{m \times n} \colon Y = \bP_1(X)[Y] \},\\ T_2(X) = \{ Y \in \R^{m \times n} \colon Y = \bP_2(X)[Y] \}.
\end{gathered}
\end{equation*} 
Here we recall that the Moore-Penrose inverse of $X \in \R^{m \times n}$ is defined as $X^+ = V \Sigma^{-1} U^T \in \R^{n \times m}$, where $X = U \Sigma V^T$ is a `slim' singular value decomposition of $X$ involving only the positive singular values. It is then obvious that the $\bP_i(X)$ are projections whose ranges are the subspaces $T_i(X)$ as defined in~\eqref{eq: subspace 1}. We also see that $XX^+$ and $X^+X$ are themselves orthogonal projections in $\R^m$ and $\R^n$, respectively. Since the Frobenius inner product of two matrices can be computed column- or row-wise, it then easily follows that the $\bP_i(X)$ are in fact orthogonal projections with respect to this inner product.

It is well known that for every $k$ the set
\[
\mathcal{M}_k = \{ X \in \R^{m \times n} \colon \rank(X) = k \}
\]
is a smooth embedded submanifold of $\R^{m \times n}$ of codimension $(m-k)(n-k)$~\cite[Example~8.14]{Lee2003}. It can further be shown that the space
\[
T(X) = T_1(X) + T_2(X)
\]
is the tangent space to that manifold at $X \in \mathcal{M}_k$. %\footnote{{\color{red} The space $T(X)$ is the image of the linearization of the map $(U,V) \mapsto U V^T$ at $(U,V)$ such that $X =UV^T$ and is hence contained in the tangent space. Since it has the correct dimension, it equals the tangent space.}}
Therefore, if $\bar X$ has rank $k$ and is a fixed point of a (locally) smooth map $\bS \colon \R^{m \times n} \to \R^{m \times n}$ satisfying
\begin{equation}\label{eq: rank condition on S}
\rank(\bS(X)) \le \rank(X)
\end{equation}
for all $X$, then the condition $\rho(\bS'(\bar X) \bar{\bP}) < 1$ is sufficient for R-linear convergence
\begin{equation}\label{eq: R-linear rate via spectral radius}
\limsup_{\ell \to \infty} \| X_\ell - \bar X \|^{1/\ell} \le \rho(\bS'(\bar X) \bar{\bP})
\end{equation}
(in any norm, since we are now in a finite-dimensional setting) of an iteration
\[
X_{\ell + 1} = \bS(X_\ell)
\]
with starting guess $X_0$ of rank $k$ close enough to $\bar X$.\footnote{Let us prove this. If $\bar X$ is a fixed point, then, by continuity, $\bS(X)$ is close to $\bar X$ when $X$ is close to $\bar X$. Hence under the given assumptions, $\rank(\bS(X)) = k$ for all $X$ with $\rank(X) = k$ that are close enough to $\bar X$ (by semicontinuity of rank). Therefore, $\bS$ can be locally regarded as a map between smooth submanifolds of $\mathcal{M}_k$, $\bS'(\bar X)$ maps the tangent space $T(\bar X)$ into itself, and the sufficiency of the condition $\rho(\bS'(\bar X)) < 1$ on $T(\bar X)$ for local contractivity follows in the same way as in linear space using differential calculus on manifolds.} 

From~\eqref{eq: low-rank projections} it is obvious that $\bP_1(X)$ and $\bP_2(X)$ commute. Correspondingly,
\begin{equation}\label{eq: full projection}
\bP(X) = \bP_1(X) + \bP_2(X) - \bP_1(X) \bP_2(X) = \bP_1(X) + \bP_2(X) - \bP_2(X) \bP_1(X)
\end{equation}
is the orthogonal projection on $T(X)$.

\subsection{Derivatives of projections}\label{sec: derivatives of projections}

The reader will have noticed that the mappings $X \mapsto \bP_i(X)$ as defined in~\eqref{eq: low-rank projections} are not differentiable on $\R^{m \times n}$ unless $\rank(X) = \min(m,n)$, since the map $X \mapsto X^+$ is not. \changed{To resolve this potential conflict to the theory developed above, we can formally extend the projections to smooth maps in a neighborhood of $\mathcal{M}_k$. Indeed, let $\mathcal{D}$ be the open set of all matrices whose $k$-th singular value is strictly larger than the $(k+1)$-th one (such a matrix necessarily has rank at least $k$). To every $X \in \mathcal{D}$ we attach the orthogonal projections $U(X)U(X)^T$ and $V(X)V(X)^T$ onto the subspaces spanned by the dominant left, respectively, right singular vectors gathered as columns in the matrices $U(X) \in \R^{m \times k}$, respectively, $V(X) \in \R^{n \times k}$. These maps are smooth on $\mathcal{D}$. The projections $\bP_i$ from~\eqref{eq: low-rank projections} can hence be extended to smooth maps on $\mathcal{D}$ via
\begin{equation}\label{eq: low-rank projections extended}
\begin{gathered}
\bP_1(X)[Z] = Z V(X)V(X)^T, \qquad \bP_2(X)[Z] = U(X)U(X)^T Z,
\end{gathered}
\end{equation}
which coincide with~\eqref{eq: low-rank projections} if $X \in \mathcal{M}_k$. The formula~\eqref{eq: full projection} remains valid.

We now present the derivatives of $\bP_1$ and $\bP_2$ at points $X \in \mathcal{M}_k$.
We first consider directional derivatives $\bP_i'(X;H)$ for $H \in T(X)$, which is the tangent space to $\mathcal{M}_k$ at $X$. 
The Moore-Penrose pseudoinverse is a smooth map on manifolds of constant rank, and its Riemannian derivative at $X \in \mathcal{M}_k$ is given by
\begin{equation}\label{eq: derivative of pseudo inverse}
\mathrm{D}X^+[H] = - X^+ H X^+ + X^+ (X^+)^T H^T (I - XX^+) + (I - X^+ X) H^T (X^+)^T X^+
\end{equation}
with $H \in T(X)$; see~\cite{GP1973}.
Hence, we compute from~\eqref{eq: low-rank projections} that
\begin{equation}\label{eq: derivative 1}
\begin{aligned}
\bP_1'(X;H)[Z] &=  Z X^+ H + Z \cdot \mathrm{D}X^+[H] \cdot X \\ &= Z X^+ H - ZX^+ H X^+ X + Z(I - X^+ X) H^T (X^+)^T 
\end{aligned}
\end{equation}
for $H \in T(X)$. Here we have used $(I - XX^+) X = 0$ and $(X^+)^T X^+ X = (X^+)^T$. Correspondingly,
\begin{equation}\label{eq: derivative 2}
\begin{aligned}
\bP_2'(X;H)[Z] &= H X^+ Z + X \cdot \mathrm{D}X^+[H] \cdot Z \\
&= H X^+ Z  - X X^+ H X^+ Z + (X^+)^T H^T (I - XX^+) \changedtwo{Z}
\end{aligned}
\end{equation}
for $H \in T(X)$, since $X(I - X^+ X) = 0$ and $X X^+ (X^+)^T = (X^+)^T$.

Regarding directional derivatives $H \in T(X)^\bot$ (which will not be needed later on), we invite the reader to verify that $T(X)^\bot$ consists of all matrices of the form $H = (I - XX^+)E(I - X^+X)$ for some $E$, and that small perturbations of $X$ along such directions do not change the  dominant singular vectors. Hence $t \mapsto \bP_i(X + tH)$ as defined in~\eqref{eq: low-rank projections extended} is constant for small $\abs{t}$ and so
\[
\bP_1'(X;H) = \bP_2'(X;H) = 0 \quad \text{for $H \in T(X)^\bot$}.
\]
Note that the formulas~\eqref{eq: derivative 1} and~\eqref{eq: derivative 2} also yield zero when applied to $H \in T(X)^\bot$ (since $X^+H = 0$ and $HX^+ = 0$) and can hence be used in general.
}

The formulas~\eqref{eq: derivative 1} and~\eqref{eq: derivative 2} can be considerably simplified when $Z$ is orthogonal to the tangent space $T(X)$, since in this case $\bP_1(X)[Z] = 0$, implying $Z X^+ = 0$, and $\bP_2(X)[Z] = 0$, implying $X^+ Z = 0$. For such $Z$,~\eqref{eq: derivative 1} and~\eqref{eq: derivative 2} become
\[
\bP_1'(X;H)[Z] = Z H^T (X^+)^T
\]
and
\[
\bP_2'(X;H)[Z] = (X^+)^T H^T Z.
\]
Note that since we need to derive the operators $\bar{\bN}_i [H] = \bP_i'(\bar{X};H) [\nabla f(\bar{X})]$ defined in~\eqref{eq: operator N} at critical points $\bar{X}$ of $f$ on $\mathcal{M}_k$, where $\nabla f(\bar{X})$ is orthogonal to $T(\bar{X})$, this is indeed the case of interest. For reference we state this as a lemma.
\begin{lemma}\label{lem: derivatives of projections}
Let $X \in \mathcal{M}_k$ be a critical point of $f$ on $\mathcal{M}_k$, that is, $\bP(\bar{X}) \nabla f(\bar{X}) = 0$. Then for the projections~\eqref{eq: low-rank projections} it holds
\begin{equation*}\label{eq: formula N1}
\bar{\bN}_1 [H] \coloneqq \bP_1'(\bar{X};H) \nabla f(\bar{X}) = \nabla f({\bar{X}}) H^T (\bar{X}^+)^T
\end{equation*}
and
\begin{equation*}\label{eq: formula N2}
\bar{\bN}_2 [H] \coloneqq \bP_2'(\bar{X};H)[\nabla f(\bar{X})] = (\bar{X}^+)^T H^T \nabla f(\bar{X})
\end{equation*}
for all $H \in T(\bar{X})$. In particular, $\bar{\bN}_i = 0$ on $T_i (\bar X)$.
\end{lemma}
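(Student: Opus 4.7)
The plan is to recognize that the formulas for $\bar{\bN}_1[H]$ and $\bar{\bN}_2[H]$ are immediate specializations of the simplified expressions $\bP_1'(X;H)[Z] = ZH^T(X^+)^T$ and $\bP_2'(X;H)[Z] = (X^+)^T H^T Z$ derived just above the lemma, which hold whenever $Z \in T(X)^\bot$. The critical point condition $\bP(\bar X)\nabla f(\bar X) = 0$ is precisely the statement that $\nabla f(\bar X) \in T(\bar X)^\bot$, so substituting $Z = \nabla f(\bar X)$ yields the two claimed expressions at once.

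For the ``in particular'' claim I would fix a thin SVD $\bar X = U\Sigma V^T$ with $U \in \R^{m \times k}$ and $V \in \R^{n \times k}$ having orthonormal columns. Then $\bar X^+\bar X = VV^T$ and $\bar X\bar X^+ = UU^T$, so $H \in T_1(\bar X)$ amounts to $H = HVV^T$ while $H \in T_2(\bar X)$ amounts to $H = UU^T H$. Because both $T_1(\bar X)$ and $T_2(\bar X)$ are subspaces of $T(\bar X)$, the critical point condition implies the two separate orthogonality relations $\bP_1(\bar X)[\nabla f(\bar X)] = \nabla f(\bar X) VV^T = 0$ and $\bP_2(\bar X)[\nabla f(\bar X)] = UU^T \nabla f(\bar X) = 0$. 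Right-multiplying the first by $V$ and left-multiplying the second by $U^T$, and using $V^TV = I_k$ and $U^TU = I_k$, gives the sharper identities $\nabla f(\bar X) V = 0$ and $U^T \nabla f(\bar X) = 0$.

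Combining these, for $H \in T_1(\bar X)$ one computes $\bar{\bN}_1[H] = \nabla f(\bar X) (HVV^T)^T (\bar X^+)^T = \nabla f(\bar X) V V^T H^T (\bar X^+)^T = 0$, and symmetrically for $H \in T_2(\bar X)$ one gets $\bar{\bN}_2[H] = (\bar X^+)^T (UU^T H)^T \nabla f(\bar X) = (\bar X^+)^T H^T UU^T \nabla f(\bar X) = 0$.

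I do not anticipate any real obstacle: the lemma is essentially a bookkeeping exercise since the analytic heavy lifting (differentiating the pseudoinverse and simplifying the projection derivatives) has already been carried out in the paragraphs above. The only mild conceptual twist worth spelling out is that orthogonality of $\nabla f(\bar X)$ to the full tangent space $T(\bar X) = T_1(\bar X) + T_2(\bar X)$ yields two independent orthogonality relations, one for each summand, which is what makes the vanishing of $\bar{\bN}_i$ on $T_i(\bar X)$ go through.
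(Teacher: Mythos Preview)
Your proposal is correct and matches the paper's approach exactly: the paper does not give a separate proof of the lemma but states it as a summary of the simplified formulas $\bP_1'(X;H)[Z] = ZH^T(X^+)^T$ and $\bP_2'(X;H)[Z] = (X^+)^T H^T Z$ derived just before (valid for $Z \in T(X)^\bot$), with $Z = \nabla f(\bar X)$. Your additional verification of the ``in particular'' claim via the identities $\nabla f(\bar X)V = 0$ and $U^T\nabla f(\bar X) = 0$ is a correct and appropriate elaboration of what the paper leaves implicit.
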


\begin{remark}
Regarding the initial problem~\eqref{eq: nonsmooth low rank} on $\mathcal{M}_{\le k}$, we remark that the ``smoothness'' assumption $\rank(\bar X) = k$, which has been crucial in the above derivations, is plausible in most applications, except for very special or artificial cases. It has been shown in~\cite[Corollary~3.4]{SU2015} that critical points $\bar X$ of~\eqref{eq: nonsmooth low rank}, for example, local minima, satisfy either $\rank(\bar X) = k$ or $\nabla f(\bar X) = 0$.
\end{remark}

\subsection{Alternating least squares algorithm}\label{sec: als}

When $f$ is a strictly convex quadratic function, the outlined method is known as the \emph{alternating least squares} (ALS) method. Let us give formulas for this important special case in more detail.

For simplicity, we assume that $f(0) = 0$. Then $f$ takes the form
\begin{equation}\label{eq: quadratic cost function}
f(X) = \frac{1}{2} \langle X, \bA[X] \rangle_F - \langle X, B \rangle_F, 	
\end{equation}
where $\bA$ is a symmetric positive definite linear operator on $\R^{m \times n}$, and $B \in \R^{m \times n}$. We have $\nabla f(X) = \bA[X] - B$, and the Hessian at every point is the operator $\bA$.

\changed{
Minimizing the function $f$ without constraint is equivalent to solving the linear matrix equation $\bA[X] = B$. Let
\[
X^* = \bA^{-1}[B]
\]
be the solution. Introducing the $\bA$-norm
\[
\| X \|_\bA = \sqrt{\langle X, \bA[X] \rangle_F}
\]
on $\R^{m \times n}$, we can rewrite the function $f$ as
\[
f(X) = \frac{1}{2} \| X - X^*\|_{\bA}^2 - \frac{1}{2} \| X^* \|_\bA^2.
\]
This shows that minimizing $f$ on $\mathcal{M}_{\le k}$ is equivalent to finding the best rank-$k$ approximation(s) of the true solution $X^*$ in $\bA$-norm, and they serve as approximate low-rank solutions to the linear equation. The ALS algorithm tries to find such minima of $f$ on $\mathcal{M}_{\le k}$.
}

At a given iterate $X_\ell \in \mathcal{M}_{\le k}$, the first step of ALS computes
\[
\bS_1(X_\ell) = \argmin_{X \in T_1(X_\ell)} f(X).
\]
Since $\bA$ is positive definite, there is indeed a unique solution, and it is given as
\begin{equation}\label{eq: S1 for ALS}
\bS_1(X) = (\bP_1(X) \bA \bP_1(X))^{-1}[\bP_1(X)[B]].
\end{equation}
Here, as usual, $(\bP_1(X) \bA \bP_1(X))^{-1}$ is understood as the inverse of the operator $\bP_1(X) \bA \bP_1(X)$ on its invariant subspace $T_1(X)$. The map $X \mapsto \bS_1(X)$ is differentiable on the manifold $\mM_k$ of rank-$k$ matrices, since $\bP_1$ is.

If $\bS_1(X_\ell)$ has rank $k$,\footnote{If not, there are several options, but we ignore that case.} then the next step of ALS computes
\begin{equation}\label{eq: general ALS iteration}
X_{\ell +1} = \bS(X_\ell) \coloneqq \bS_2(\bS_1(X_\ell)) = \argmin_{X \in T_2(\bS_1(X_\ell))} f(X).
\end{equation}
The solution map $\bS_2$ is given as
\begin{equation}\label{eq: S2 for ALS}
\bS_2(X) = (\bP_2(X) \bA \bP_2(X))^{-1}[\bP_2(X)[B]].
\end{equation}
We repeat once more that $X \in T_1(X)$ and $X \in T_2(X)$ for every $X$, so we are in the abstract framework developed in sec.~\ref{sec: abstract setup}.

The original idea of AO for low-rank optimization is to operate on a (nonunique) factorization $X_\ell^{} = U_\ell^{} V_\ell^T$. In terms of these factors, more precisely, their vectorizations, the ALS method becomes the algorithm displayed as Algorithm~\ref{Alg:ALS}, where $\bA$ is to be understood as an $mn \times mn$ matrix and $\otimes$ is the standard Kronecker product for matrices. As explained in the introduction, the QR decompositions are not mandatory in theory, but highly recommended in practice for numerical stability.

\begin{algorithm}[H]
\DontPrintSemicolon
\KwIn{$B \in \R^{m \times n}$, %$U_0 \in \R^{m \times k}$,
$V_0 \in \R^{n \times k}$.}
\For{$\ell = 0,1,2,\dots$}{
$\VEC (U) = \left((V_\ell^T\otimes I) \, \bA \, (V_\ell \otimes I)\right)^{-1} \VEC (BV_\ell),\quad QR = \mathrm{qr}(U)$\;
$U \coloneqq Q$\;
$\VEC (V^T) = \left((I\otimes U^T) \, \bA \, (I\otimes U)\right)^{-1} \VEC (U^TB),\quad QR = \mathrm{qr}(V)$\;
$V_{\ell+1} \coloneqq Q, \quad U_{\ell+1} \coloneqq U R^{T}$\;
%{\color{red} To be checked}
}
\caption{ALS algorithm for~\eqref{eq: quadratic cost function}}\label{Alg:ALS}
\end{algorithm}

\subsection{SVD block power method}\label{sec: power method}

As a special case, we now consider the quadratic function~\eqref{eq: quadratic cost function} with $\bA = \bI$. It corresponds to the task
\begin{equation}\label{eq: best approximation problem}
\min_{\rank(X) \le k} \frac{1}{2}\| X - B \|_F^2
\end{equation}
of computing a best rank-$k$ approximation to matrix $B$ in the Frobenius norm. Since $\bA = \bI$, we have $(\bP_i(X) \bA \bP_i(X))^{-1} \bP_i(X) = \bP_i(X)$ for $i=1,2$, and hence the update formulas~\eqref{eq: S1 for ALS} and~\eqref{eq: S2 for ALS} for AO simplify to
\begin{equation}\label{eq: S1 and S2 for best approximation}
\bS_1(X) = \bP_1(X)[B] =  B X^+ X, \qquad
%\]
%and
%\[
\bS_2(X) = \bP_2(X)[B] = XX^+ B.
\end{equation}
The resulting ALS iteration $X_{\ell+1} = \bS(X_\ell)$ becomes
\begin{equation}\label{eq: Block power method}
X^{}_{\ell + 1/2} = B X^+_\ell X^{}_\ell, \qquad X^{}_{\ell + 1} = X^{}_{\ell + 1/2} X_{\ell + 1/2}^+ B.
\end{equation}

Writing $X_\ell^{} = U_\ell^{} \Sigma_\ell^{} V_\ell^T$, it is easily seen (and shown below) that the sequence generated by~\eqref{eq: Block power method} is the same as in the \emph{simultaneous orthogonal iteration}, which is a two-sided block power method for computing the dominant $k$ left and right singular subspaces of $B$, displayed as Algorithm~\ref{Alg:block power method} (provided $X_0$ has the row space spanned by $V_0$).

\bigskip

\begin{algorithm}[H]
\DontPrintSemicolon
\KwIn{$B \in \R^{m \times n}$, $k \le \min(m,n)$, $V_0 \in \R^{n \times k}$ such that $V_0^T V_0^{} = I$}
\For{$\ell = 0,1,2,\dots$}{
$QR = \mathrm{qr} (BV_\ell)$\tcp*{tall QR decomposition}
$U_{\ell + 1} \coloneqq Q$\;
$QR = \mathrm{qr} (B^T U_{\ell+1})$\tcp*{tall QR decomposition}
$V_{\ell + 1} \coloneqq Q$, \quad $S_{\ell+1} = R^T$\;
}
\caption{Simultaneous orthogonal iteration}\label{Alg:block power method}
\end{algorithm}

\bigskip

Let
\begin{equation}\label{eq: svd of B}
B = \sum_{i=1}^{\min(m,n)} \sigma_i^{} \bar u_i^{} \bar v_i^T
\end{equation}
be the SVD of $B$, that is, $\bar u_1 , \bar u_2, \dots $ and $\bar v_1 ,\bar v_2 , \dots$ are orthonormal systems in $\R^m$ and $\R^n$, respectively, and $\sigma _1 \ge \sigma_2 \ge \dots \ge 0$. If $\sigma_k > \sigma_{k+1}$, then it can be shown that the sequence $X_\ell = U_\ell^{} S_\ell^{} V_\ell^T$ generated in Algorithm~\ref{Alg:block power method} converges to the unique best rank-$k$ approximation
\begin{equation}\label{eq: svd of X}
\bar X = \sum_{i=1}^k \sigma_i^{} \bar u_i^{} \bar v_i^T
\end{equation}
for almost every starting guess $X_0 = U_0^{} V_0^T$. 
In fact, the method produces the same subspaces as the corresponding orthogonal iterations for the symmetric matrices $B^TB$ and $BB^T$, respectively, whose eigenvalues are the $\sigma_i^2$ (zero may be a further eigenvalue). Hence, by well-known results, $U_\ell \to [\bar u_1, \dots, \bar u_k]$ and $V_\ell \to [\bar v_1, \dots, \bar v_k]$ in terms of subspaces for almost every initialization, with a convergence rate $O\left( \sigma_{k+1}^2 / \sigma_k^2 \right)$; see~\cite{B1957,GVL2013}. As an application of our abstract framework \changedtwo{we are able to obtain this (known) rate of convergence from the local convergence analysis of the ALS sequence~\eqref{eq: Block power method}}.

\begin{theorem}\label{thm: block power method}
Let $B$ have singular values $\sigma_1 \ge \dots \ge \sigma_k > \sigma_{k+1} \ge  \dots$, and the unique best rank-$k$ approximation $\bar X$. Then the sequence $X_{\ell+1} = \bS(X_\ell) = \bS_2(\bS_1(X_\ell))$ defined via~\eqref{eq: S1 and S2 for best approximation}, respectively,~\eqref{eq: Block power method} (AO for problem~\eqref{eq: best approximation problem}) is, in exact arithmetic, identical to the sequence $X_\ell = U_\ell^{} S_\ell^{} V_\ell^T$ generated by the simultaneous orthogonal iteration (Algorithm~\ref{Alg:block power method}). With $\bar \bP$ = $\bP(\bar X)$ as before, it holds that
\begin{equation}\label{eq: exact spectral radius}
 \rho(\bS'(\bar X) \bar{\bP}) = \left(\frac{\sigma_{k+1}}{\sigma_k}\right)^{2}.
\end{equation}
Consequently, by~\eqref{eq: R-linear rate via spectral radius}, the sequence $(X_\ell)$ converges (for close enough starting guesses $X_0 \in \mathcal{M}_k$) $R$-linearly to $\bar X$ at a rate
\begin{equation}\label{eq: local convergence sequence}
\limsup_{\ell \to \infty} \| X_{\ell} - \bar X \|^{1/\ell} \le \left(\frac{\sigma_{k+1}}{\sigma_k}\right)^{2}
\end{equation}
(in any norm). The convergence of the column and row spaces can be estimated correspondingly in the sense of the operator norm of projectors as
\begin{equation}\label{eq: local convergence subspace}
\limsup_{\ell \to \infty} \| \bP_i(X_\ell) - \bP_i(\bar X) \|^{1/\ell} \le \left(\frac{\sigma_{k+1}}{\sigma_k}\right)^{2}, \quad i=1,2.
\end{equation}
\end{theorem}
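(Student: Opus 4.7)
The plan is to reduce the computation of $\rho(\bS'(\bar X) \bar{\bP})$ to an instance of Theorem~\ref{th: convergence rate for identity Hessian} and then evaluate the resulting spectral radius explicitly using the SVD of $B$. For the best-approximation problem~\eqref{eq: best approximation problem} the Hessian equals the identity operator, $\bar{\bA} = \bI$, so the $\bar{\bA}$-orthogonal projections $\bar{\bP}_i^{\bar{\bA}}$ coincide with the Frobenius-orthogonal projections $\bar{\bP}_i$ of~\eqref{eq: low-rank projections}. These commute by~\eqref{eq: full projection}, which verifies hypothesis (i). Moreover $\bar{\bB}_i$ reduces to the identity on $T_i(\bar X)$. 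The gap assumption $\sigma_k > \sigma_{k+1}$ ensures that $\bar X$ is a critical point of $f$ on $\mathcal{M}_k$, so Lemma~\ref{lem: derivatives of projections} supplies explicit expressions for $\bar{\bN}_1, \bar{\bN}_2$ together with the identity $\bar{\bN}_i = 0$ on $T_i(\bar X)$, which is hypothesis (ii). Applying Theorem~\ref{th: convergence rate for identity Hessian} then gives $\rho(\bS'(\bar X) \bar{\bP}) = \rho(\bar{\bN}_2 \bar{\bN}_1 \bar{\bP})$.

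The heart of the proof is the explicit evaluation of this last spectral radius. I would group the dominant and trailing singular vectors into blocks $\bar U_k, \bar V_k$ and $\bar U_\bot, \bar V_\bot$, with $\bar \Sigma_k, \bar \Sigma_\bot$ the corresponding diagonal singular-value matrices, and parametrize a generic tangent vector as
\[
H = \bar U_k M_1 \bar V_k^T + \bar U_\bot M_2 \bar V_k^T + \bar U_k M_3 \bar V_\bot^T.
\]
Substituting $\nabla f(\bar X) = \bar X - B = -\bar U_\bot \bar \Sigma_\bot \bar V_\bot^T$ and $(\bar X^+)^T = \bar U_k \bar \Sigma_k^{-1} \bar V_k^T$ into the formulas of Lemma~\ref{lem: derivatives of projections}, the block-orthogonality relations $\bar U_k^T \bar U_\bot = 0$ and $\bar V_k^T \bar V_\bot = 0$ annihilate the contributions from $M_1$ and $M_2$ in $\bar{\bN}_1 H$, leaving only $\bar{\bN}_1 H = -\bar U_\bot \bar \Sigma_\bot M_3^T \bar \Sigma_k^{-1} \bar V_k^T$; a second application of the analogous formula for $\bar{\bN}_2$ produces $\bar{\bN}_2 \bar{\bN}_1 H = \bar U_k (\bar \Sigma_k^{-2} M_3 \bar \Sigma_\bot^2) \bar V_\bot^T$. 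In this parametrization the operator $\bar{\bN}_2 \bar{\bN}_1 \bar{\bP}$ is represented by the linear map $M_3 \mapsto \bar \Sigma_k^{-2} M_3 \bar \Sigma_\bot^2$ (and zero on the remaining two blocks), whose eigenvalues are $\sigma_j^2/\sigma_i^2$ for $i \le k < j$; the largest of these is $(\sigma_{k+1}/\sigma_k)^2$, establishing~\eqref{eq: exact spectral radius}.

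The remaining statements are essentially bookkeeping. The equivalence of the ALS iteration~\eqref{eq: Block power method} with Algorithm~\ref{Alg:block power method} follows by unpacking the QR decompositions: if $X_\ell = U_\ell V_\ell^T$ with $V_\ell^T V_\ell = I$, then $X_\ell^+ X_\ell = V_\ell V_\ell^T$, and the factorization $B V_\ell = QR$ identifies $Q$ with $U_{\ell+1}$, after which the second half-step transposes into the $B^T U_{\ell+1}$ QR sweep. The rank-preservation condition~\eqref{eq: rank condition on S} is immediate from the fact that $\bS_1(X)$ and $\bS_2(X)$ inherit the row, respectively column, space of $X$, so that starting close to $\bar X$ the iterates remain in $\mathcal{M}_k$ by lower semicontinuity of rank; then~\eqref{eq: local convergence sequence} follows from~\eqref{eq: R-linear rate via spectral radius}. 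Finally,~\eqref{eq: local convergence subspace} comes from Lipschitz continuity of the extended projection maps in~\eqref{eq: low-rank projections extended} on $\mathcal{M}_k$, which controls $\| \bP_i(X_\ell) - \bP_i(\bar X) \|$ by a constant multiple of $\| X_\ell - \bar X \|$. The main obstacle is the block-matrix algebra of $\bar{\bN}_2 \bar{\bN}_1$; once the tangent-space decomposition above is in place, the spectral radius falls out of an elementary diagonalization and everything else is a direct consequence of the abstract framework.
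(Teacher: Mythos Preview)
Your proposal is correct and follows essentially the same route as the paper: verify the hypotheses of Theorem~\ref{th: convergence rate for identity Hessian} (commuting projectors from $\bar{\bA}=\bI$, and $\bar{\bN}_i=0$ on $T_i(\bar X)$ from Lemma~\ref{lem: derivatives of projections}), reduce to $\rho(\bar{\bN}_2\bar{\bN}_1\bar{\bP})$, and diagonalize using the SVD basis of $B$. The only cosmetic difference is that the paper writes $\bar{\bN}_2\bar{\bN}_1$ as the Kronecker-product operator $(\bar X^+)^T\bar X^+\otimes(\bar X-B)(\bar X-B)^T$ and reads off the eigenvectors $\bar u_i^{}\bar v_j^T$ directly, whereas you parametrize $T(\bar X)$ by blocks $(M_1,M_2,M_3)$ and compute the induced map $M_3\mapsto \bar\Sigma_k^{-2}M_3\,\bar\Sigma_\bot^T\bar\Sigma_\bot$; both yield the same eigenvalues $\sigma_j^2/\sigma_i^2$ with the maximum attained in $T(\bar X)$.
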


\begin{proof}
We first show by induction that the methods are the same. If $X_\ell$ has the row space spanned by $V_\ell$, then $X_{\ell + 1/2}$ in~\eqref{eq: Block power method} can be written $B V_\ell^{} V_\ell^T$, which has the same column space as $B V_\ell$. Therefore, using $U_{\ell + 1}$ from Algorithm~\ref{Alg:block power method}, we get that $X_{\ell+1}$ from~\eqref{eq: Block power method} equals $U_{\ell + 1}^{} U_{\ell + 1}^T B = U_{\ell + 1}^{} \Sigma_{\ell +1}^{} V_{\ell + 1}^T$.

One may attempt to compute the spectral radius of $\bS'(\bar X) \bar{\bP}$ from the explicit formulas~\eqref{eq: S1 and S2 for best approximation} and~\eqref{eq: derivative of pseudo inverse}, but it will be more elegant to invoke Theorem~\ref{th: convergence rate for identity Hessian}.
Since $\bar{\bA} = \bA = \bI$, the condition in item (i) of that theorem is obviously satisfied ($\bP_1(X)$ and $\bP_2(X)$ commute; see~\eqref{eq: low-rank projections}). The condition (ii), that $\bar{\bN}_i = 0$ on $T_i(\bar X)$, is stated in Lemma~\ref{lem: derivatives of projections}. Taking into account further that the $\bar{\bB}_i$ are identities, Theorem~\ref{th: convergence rate for identity Hessian} yields the formula
\begin{equation}\label{eq: reduction to rhoN1N2}
\rho(\bS'(\bar X) \bar{\bP}) = \rho(\bar{\bN}_2 \bar{\bN}_1 \bar{\bP})
\end{equation}
for the iteration~\eqref{eq: Block power method}. By Lemma~\ref{lem: derivatives of projections},
\begin{equation*}
\bar{\bN}_2 [ \bar{\bN}_1 [ H ] ] = (\bar{X}^+)^T \bar{X}^+ H (\nabla f(\bar{X}))^T \nabla f(\bar{X}) \quad \text{for $H \in T(\bar X)$.} 
\end{equation*}
Taking further into account that $\nabla f(\bar X) = \bar X - B$, this shows that 
\begin{equation}\label{eq: N2N1 on T}
\bar{\bN}_2 \bar{\bN}_1 \bar \bP = \bar \bP \bar{\bN}_2 \bar{\bN}_1 \bar \bP = \bar \bP \cdot [(\bar{X}^+)^T \bar{X}^+  \otimes (\bar X - B) (\bar X - B)^T] \cdot \bar \bP,
\end{equation}
where we use the Kronecker product operator notation (see~\eqref{eq: Kronecker operator}). \changed{
By~\eqref{eq: svd of B} and~\eqref{eq: svd of X}, the rank-one matrices $E_{i,j} = \bar u_i^{} \bar v_j^T$, $i=1,\dots,m$, $j = 1,\dots,n$, form an orthonormal system of eigenvectors of the operator $(\bar{X}^+)^T \bar{X}^+  \otimes (\bar X - B) (\bar X - B)^T$, corresponding to eigenvalues $\lambda_{i,j} = \sigma_j^2 / \sigma_i^2$ for $i \le k$ and $j > k$, and $\lambda_{ij} = 0$ otherwise. \changedtwo{The largest of these eigenvalues is $\lambda_{k,k+1}^{} = \sigma_{k+1}^2 / \sigma_k^2$. Since the corresponding eigenvector $E^{}_{k,k+1} = \bar u_k^{} \bar v_{k+1}^T$ belongs to $T_2(\bar X) \subseteq T(\bar X)$ (see~\eqref{eq: low-rank projections}), the formula~\eqref{eq: N2N1 on T} implies that $\lambda_{k,k+1}^{}$ is also the largest (in modulus) eigenvalue of $\bar{\bN}_2 \bar{\bN}_1 \bar \bP$, which proves the assertion~\eqref{eq: exact spectral radius}.
}
}

Since $\rank(\bar X) = k$, it follows that $\bS$ is a local contraction on the manifold $\mathcal{M}_k$ in the neighborhood of $\bar X$, and the R-linear convergence rate of $\| X_\ell - \bar X \|_F$ is as asserted (see the explanations for~\eqref{eq: R-linear rate via spectral radius} in sec.~\ref{sec: block power method}).

Let us show that~\eqref{eq: local convergence sequence} implies~\eqref{eq: local convergence subspace} for $\bP_1$. 
For all $Z$ with $\| Z \|_F = 1$, we can estimate
\begin{align*}
\| (\bP_1(X_\ell) - \bP_1(\bar X))[Z] \|_F &= \| Z(X_\ell^+ X_\ell^{} - \bar X^+ \bar X) \|_F \\ &\le \| X_\ell^+ X_\ell^{} - \bar X^+ \bar X \|_2 \\
&= \| X_\ell^+ (X_\ell^{} - \bar X) + (X_\ell^+ - \bar X_{}^+) \bar X \|_2 
=O( \| X_\ell^{} - \bar X \|_2), 
\end{align*}
since $X_\ell \to \bar X$ on $\mathcal{M}_k$ (implying that $\| X^+_\ell \|_2$ is bounded).
\end{proof}

\changed{
\begin{remark}\label{rem: remark about one sweep}
When $\sigma_k > \sigma_{k+1} = 0$, that is $\rank (B) = k$, the theorem yields $\rho(\bS'(\bar X) \bar{\bP}) = 0$ which technically indicates superlinear convergence. In fact, this is a situation where Remark~\ref{rem: quadratic convergence} applies: it holds $\nabla f(\bar X) = 0$ and, hence, $\bar \bN_i = 0$.  
However, as it is known, and not difficult to see, the power method~\eqref{eq: Block power method} initialized with the correct rank $k$ will find $\bar X = B$ after only one sweep for almost every starting guess $X_0$. The only condition is that $X_{1/2}^{} = B X_0^+ X_0^{}$ is of rank $k$ which, in particular, is true for all $X_0$ in some neighborhood of $\bar X = B$.
\end{remark}
}

\changed{
\subsection{Kronecker product operators}

A main feature in the previous derivation of the local convergence rate of the block power method via the ALS analysis was the possibility of applying Theorem~\ref{th: convergence rate for identity Hessian} for the computation of the spectral radius $\rho(\bS'(\bar X) \bar{\bP})$, since for $\bar \bA = \bA = \bI$ the $\bar \bA$-orthogonal projectors $\bar \bP_1^{\bar \bA}$ and $\bar \bP_2^{\bar \bA}$ commute. Note that $\bI = I \otimes I$ is a Kronecker product of two identity matrices. To allow for at least a small generalization, we now investigate the case that $\bar \bA = A_1 \otimes A_2$ is a Kronecker product of symmetric positive definite matrices. One can show that in this case the projectors $\bar \bP_1^{\bar \bA}$ and $\bar \bP_2^{\bar \bA}$ still commute and, hence, derive estimates for $\rho(\bS'(\bar X) \bar{\bP})$ for this case based on Theorem~\ref{th: convergence rate for identity Hessian}. There is, however, a simpler way to analyze the ALS method for Kronecker product operators by reducing it to the block power method again.

Consider a quadratic function
\[
f(X) = \frac{1}{2} \langle X, \bA[X] \rangle_F - \langle X, B \rangle_F
\] 
on $\R^{m \times n}$, where the Hessian is a Kronecker product operator,
\[
\bar \bA = \bA = A_2 \otimes A_1, \qquad A_1 \in \R^{m \times m}, \ A_2 \in \R^{n \times n},
\]
by which we mean that
\begin{equation}\label{eq: Kronecker operator}
\bA[X] = (A_2 \otimes A_1)[X] = A_1^{} X A_2^T.
\end{equation}
Since $\bA$ should be symmetric positive definite, we assume that $A_1$ and $A_2$ are both symmetric positive definite.

We have already noted in sec.~\ref{sec: als} that minimizing $f$ subject to $\rank(X) \le k$ corresponds to finding the best rank-$k$ approximation of the global minimum $X^* = \bA^{-1}[B]$ in $\bA$-norm. For the case that $\bA$ is a Kronecker product operator of the considered type, the best rank-$k$ approximation in the $\bA$-norm can be in principle computed directly via SVD. For this we rewrite
\begin{align*}
f(X) &= \frac{1}{2}\| \bA^{1/2}[X] - \bA^{-1/2}[B] \|_F^2 - \frac{1}{2}\| X^* \|_\bA^2 \\ &= \frac{1}{2}\| A_1^{1/2} X A_2^{1/2} - A_1^{-1/2}BA_2^{-1/2}] \|_F^2 - \frac{1}{2}\| X^* \|_\bA^2.
\end{align*}
Since left or right multiplication by an invertible matrix does not change the rank, we can clearly see that the global minima of $f$ on $\mathcal{M}_{\le k}$ are given as
\begin{equation}\label{eq: global minimizer in Kronecker case}
\bar X = A_1^{-1/2} \bar Y A_2^{-1/2},
\end{equation}
where $\bar Y$ is a best rank-$k$ approximation of $A_1^{-1/2}BA_2^{-1/2}$, which can be computed through SVD. Obviously, there is a unique global minimum $\bar X$ if and only if $A_1^{-1/2}BA_2^{-1/2}$ has a unique best rank-$k$ approximation in the Frobenius norm.

It should therefore not come as a surprise that the ALS method in this case will be equivalent to the block power method for the matrix $A_1^{-1/2}BA_2^{-1/2}$. To prove this, it will be convenient to have the ALS update formulas in explicit matrix notation at hand. Using a decomposition
\[
X = U S V^T, \quad U^T U = I_k, \quad V^T V = I_k,
\]
the formulas~\eqref{eq: S1 for ALS} and~\eqref{eq: S2 for ALS} become
\begin{equation}\label{eq: S1 for Kron}
\bS_1(X) = A_1^{-1} B V (V^T A_2 V)^{-1} V^T
\end{equation}
and
\begin{equation}\label{eq: S2 for Kron}
\bS_2(X) = U ( U^T A_1 U)^{-1} U^T B A_2^{-1}.
\end{equation}

\begin{theorem}\label{thm: theorem for Kronecker product}
Let $\bA = A_2 \otimes A_1$ with $A_1$ and $A_2$ being symmetric positive definite, and $B \in \R^{m \times n}$. Denote by $\varsigma_1 \ge \varsigma_2 \ge \dots$ the singular values of $C = A_1^{-1/2}BA_2^{-1/2}$. If then $\varsigma_k > \varsigma_{k+1}$, then for almost every starting point $X_0 \in \mathcal{M}_k$ the sequence $X_{\ell+1} = \bS(X_\ell) = \bS_2(\bS_1(X_\ell))$ defined via~\eqref{eq: S1 for Kron}, respectively,~\eqref{eq: S2 for Kron} (Algorithm~\ref{Alg:ALS}) is well defined and converges to the unique global minimum $\bar X = A_1^{-1/2} \bar Y A_2^{-1/2}$ of the function $f$ given by~\eqref{eq: quadratic cost function} on $\mathcal{M}_{\le k}$, where $\bar Y$ is the unique best rank-$k$ approximation of $C$ in the Frobenius norm. In fact, for almost every $X_0 \in \mathcal{M}_k$ it holds (in exact arithmetic) that
\[
X_{\ell} = A_1^{-1/2} Y^{}_\ell A_2^{-1/2},
\]
where $Y_\ell$ is the sequence generated by the SVD block power method~\eqref{eq: Block power method} (Algorithm~\ref{Alg:block power method}) applied to matrix $C$ with starting point $Y_0^{} = A_1^{1/2} X_0^{} A_2^{1/2}$. The asymptotic $R$-linear convergence rate is estimated as
\[
\limsup_{\ell \to \infty} \| X_{\ell} - \bar X \|^{1/\ell} = \limsup_{\ell \to \infty} \| Y_{\ell} - \bar Y \|^{1/\ell} \le \left(\frac{\varsigma_{k+1}}{\varsigma_k}\right)^{2}
\]
(in any norm).
\end{theorem}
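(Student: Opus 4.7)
The plan is to reduce the Kronecker-product ALS iteration to the plain SVD block power method via the invertible change of variables $Y = A_1^{1/2} X A_2^{1/2}$ (equivalently $X = A_1^{-1/2} Y A_2^{-1/2}$), and then invoke Theorem~\ref{thm: block power method} applied to the matrix $C = A_1^{-1/2} B A_2^{-1/2}$ to conclude convergence and the rate.

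First, I would verify directly that the ALS sweeps commute with the substitution. Writing $X = U S V^T$ with $U, V$ having orthonormal columns and using~\eqref{eq: S1 for Kron}, one computes
\[
A_1^{1/2} \bS_1(X) A_2^{1/2} = A_1^{-1/2} B V (V^T A_2 V)^{-1} V^T A_2^{1/2}.
\]
On the other hand, the row space of $Y = A_1^{1/2} X A_2^{1/2}$ is the column span of $A_2^{1/2} V$, so the orthogonal projector onto that row space is $\tilde V \tilde V^T = A_2^{1/2} V (V^T A_2 V)^{-1} V^T A_2^{1/2}$ with $\tilde V = A_2^{1/2} V (V^T A_2 V)^{-1/2}$. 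Thus the first half-step of the block power method applied to $C$, namely $Y \mapsto C \tilde V \tilde V^T$, equals exactly $A_1^{-1/2} B V (V^T A_2 V)^{-1} V^T A_2^{1/2}$. Hence $A_1^{1/2} \bS_1(X) A_2^{1/2}$ agrees with the first half-step of the block power method applied to $C$ evaluated at $Y$. The same reasoning applied on the left (using the column space of $Y$) shows that $A_1^{1/2} \bS_2(X) A_2^{1/2}$ matches the second half-step. Combining the two sweeps yields the intertwining relation
\[
A_1^{1/2} \bS(X) A_2^{1/2} = \bS^{\text{pow}}(A_1^{1/2} X A_2^{1/2}),
\]
where $\bS^{\text{pow}}$ denotes one full sweep of~\eqref{eq: Block power method} applied to $C$.

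By induction, the ALS iterates satisfy $X_\ell = A_1^{-1/2} Y_\ell A_2^{-1/2}$, with $Y_\ell$ being the power-method iterates for $C$ starting at $Y_0 = A_1^{1/2} X_0 A_2^{1/2}$. Since $A_1^{1/2}$ and $A_2^{1/2}$ are invertible, the rank is preserved, so $X_\ell \in \mathcal{M}_k$ if and only if $Y_\ell \in \mathcal{M}_k$, which handles well-definedness: the rank condition for the power method (which holds for a.e.\ starting guess when $\varsigma_k > \varsigma_{k+1}$) transfers to the ALS sequence. Moreover, since $\bar Y$ is the unique best rank-$k$ approximation of $C$ in Frobenius norm under the gap assumption, the back-transformation in~\eqref{eq: global minimizer in Kronecker case} gives $\bar X = A_1^{-1/2} \bar Y A_2^{-1/2}$ as the unique global minimum of $f$ on $\mathcal{M}_{\le k}$.

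Finally, the convergence rate transfers immediately: Theorem~\ref{thm: block power method} applied to $C$ gives $\limsup_\ell \| Y_\ell - \bar Y \|^{1/\ell} \le (\varsigma_{k+1}/\varsigma_k)^2$, and since $X \mapsto A_1^{1/2} X A_2^{1/2}$ is a linear homeomorphism, norm-equivalence yields the same asymptotic rate for $\| X_\ell - \bar X \|$. The main (and essentially only) obstacle is the direct verification of the intertwining identity $A_1^{1/2} \bS_i(X) A_2^{1/2} = \bS_i^{\text{pow}}(A_1^{1/2} X A_2^{1/2})$ for $i=1,2$, in particular correctly identifying the orthogonal projector onto the row/column space of $Y$ in terms of $V$ and $V^T A_2 V$ (respectively $U$ and $U^T A_1 U$); everything else is bookkeeping.
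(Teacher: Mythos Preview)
Your proposal is correct and follows essentially the same route as the paper: both arguments establish the intertwining identity by showing that $A_2^{1/2} V (V^T A_2 V)^{-1} V^T A_2^{1/2}$ is precisely the orthogonal projector $Y^+Y$ onto the row space of $Y = A_1^{1/2} X A_2^{1/2}$ (and analogously on the column side), then proceed by induction and invoke the block power method result for $C$. The only minor remark is that Theorem~\ref{thm: block power method} as stated gives the rate for \emph{close enough} starting guesses; the ``almost every $X_0$'' part relies on the classical global convergence of simultaneous orthogonal iteration cited just before that theorem, which the paper also uses implicitly.
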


\begin{proof}
We know that under the given assumptions on \changedtwo{$C$} the sequence $Y_\ell$ is well defined (that is, every half-step in~\eqref{eq: Block power method} remains in $\mathcal{M}_k$) for almost every starting point $Y_0 \in \mathcal{M}_k$ and converges to $\bar Y$ at an asymptotic $R$-linear rate $(\varsigma_{k+1} / \varsigma_k)^2$.

Let $X_\ell = A_1^{-1/2} Y_\ell A_2^{-1/2} \in \mathcal{M}_k$ be true for some $\ell$. \changedtwo{Then, by~\eqref{eq: S1 for Kron}, we can write
\[
\bS_1(X_\ell) = A_1^{-1/2} C P  A_2^{-1/2}
\]
with
\[
P_\ell^{} = A_2^{1/2}V (V^T A_2 V)^{-1} V^T A_2^{1/2},
\] 
and the columns of $V$ forming a basis for the row space of $X_\ell$.}
We claim that
\(
P = Y_\ell^+ Y^{}_\ell
\).
To see this we note that $P$ is symmetric and $P^2 = P$. Further, the null space of $P$ obviously equals the orthogonal complement of the column space of $A_2^{1/2} V$. Hence $P$ is the orthogonal projector on this subspace, which, however, equals the row space of $Y_\ell = A_1^{1/2} X_\ell A_2^{1/2}$. This shows $P = Y_\ell^+ Y^{}_\ell$. It follows that
\[
\bS_1(X_\ell) = A_1^{-1/2} C Y_\ell^+ Y_\ell^{} A_2^{-1/2} = A_1^{-1/2} \changedtwo{Y_{\ell+1/2}} A_2^{-1/2},
\]
where \changedtwo{$Y_{\ell+1/2}$ is the next half-step from $Y_\ell$ in the block power method for $C$}. The argument for the second half step is analogous and the proof of the theorem is finished by induction. (Both inequalities in the asserted equality $\limsup \| X_{\ell} - \bar X \|^{1/\ell} = \limsup \| Y_{\ell} - \bar Y \|^{1/\ell}$ are immediate for a submultiplicative matrix norm.)  
\end{proof}

}

\changed{
\begin{remark}
Analogously to Remark~\ref{rem: remark about one sweep} we note that in the case $\varsigma_k > \varsigma_{k+1} = 0$ the theorem technically indicates a superlinear local convergence rate, while in reality the method will actually find the correct solution $\bar X = X^* = A_1^{-1} B A_2^{-1}$ in just one sweep for almost all $X_0$ (and, in particular, for all $X_0$ in a neighborhood of $\bar X$).
\end{remark}
}

\section{Numerical experiments}

The goal of this section is to investigate the agreement between the theoretical estimates and the numerical behaviour. The goal is to minimize the quadratic cost function~\eqref{eq: quadratic cost function} subject to $\rank(X) \le k$ using the ALS Algorithm~\ref{Alg:ALS}. We consider four examples for the Hessian operator $\bA = \bar{\bA}$: the identity operator, a simple Kronecker product operator, a Laplace-like operator, and a random positive definite operator.

In all experiments, the initial guesses $X_0$ in Algorithm~\ref{Alg:ALS} have been chosen randomly. In the figures we depict lines corresponding to the theoretical rate of convergence $\rho(\bS'(\bar X) \bar \bP)$ by the color black, which has been computed numerically at the observed limit point $\bar X$ by forming a matrix representation of the linear operator $\bS'(\bar X) =  [ (\bI - \bar{\bP}_2^{\bar{\bA}}) - \bar{\bB}_2\bar{\bN}_2][ (\bI - \bar{\bP}_1^{\bar{\bA}}) - \bar{\bB}_1\bar{\bN}_1]$ and solving a full eigenvalue problem to find the spectral radius. \changed{To assemble such a matrix representation of $\bS'(\bar X)$, we applied it successively to the (reshaped) columns of an $mn \times mn$ identity matrix using the formulas provided by Lemma~\ref{lem: derivatives of projections}.

In the plots, the theoretical rate $\rho(\bS'(\bar X) \bar \bP)$ is compared with the relative errors $\frac{\| X_\ell - \bar X \|}{\| \bar X \|}$ as well as with the relative norm of projected residuals $\frac{\| \bP(X_\ell)[A[X_\ell] - B] \|}{\| \bP(X_0)[A[X_0] - B] \| }  $, which are the quantities of interest from the perspective of Riemannian optimization (since $A[X_\ell] - B = \nabla f(X_\ell)$). Moreover, the latter have the advantage that they can be monitored in practice during the iteration.
}

\subsection{Case $\bar{\bA} = \bI$ and $\bar{\bA} = A_2 \otimes A_1$}

Consider the ALS method for problem~\eqref{eq: best approximation problem}, that is, minimizing the function
\[
f(X) = \frac 12 \|X - B\|_F^2
\]
subject to $\rank(X) \le k$, where $B\in \mathbb{R}^{n\times n}$ is a given matrix with a predefined distribution of singular values. The goal is to find the best rank-$k$ approximation $\bar X$ of $B$.

Specifically, we consider $n=50$, $k=2$, set $\sigma_2(B) = 10^{-3}$, and test with different $\sigma_3(B)$'s. By Theorem~\ref{thm: block power method}, the ALS method in this case is locally convergent at with an asymptotic $R$-linear rate $(\sigma_{k+1}/\sigma_k)^2$ and, in fact, this bound is sharp.\footnote{Using the classical linear algebra approach related to spectral decomposition and power method one should see that this rate is in fact attained for almost every starting guess.} As illustrated in Figure~\ref{fig: block power} (a), we observe close experimental agreement with this bound. 

\begin{figure*}
	\centering
	\subfloat[]{
	\includegraphics[width=0.5\textwidth]{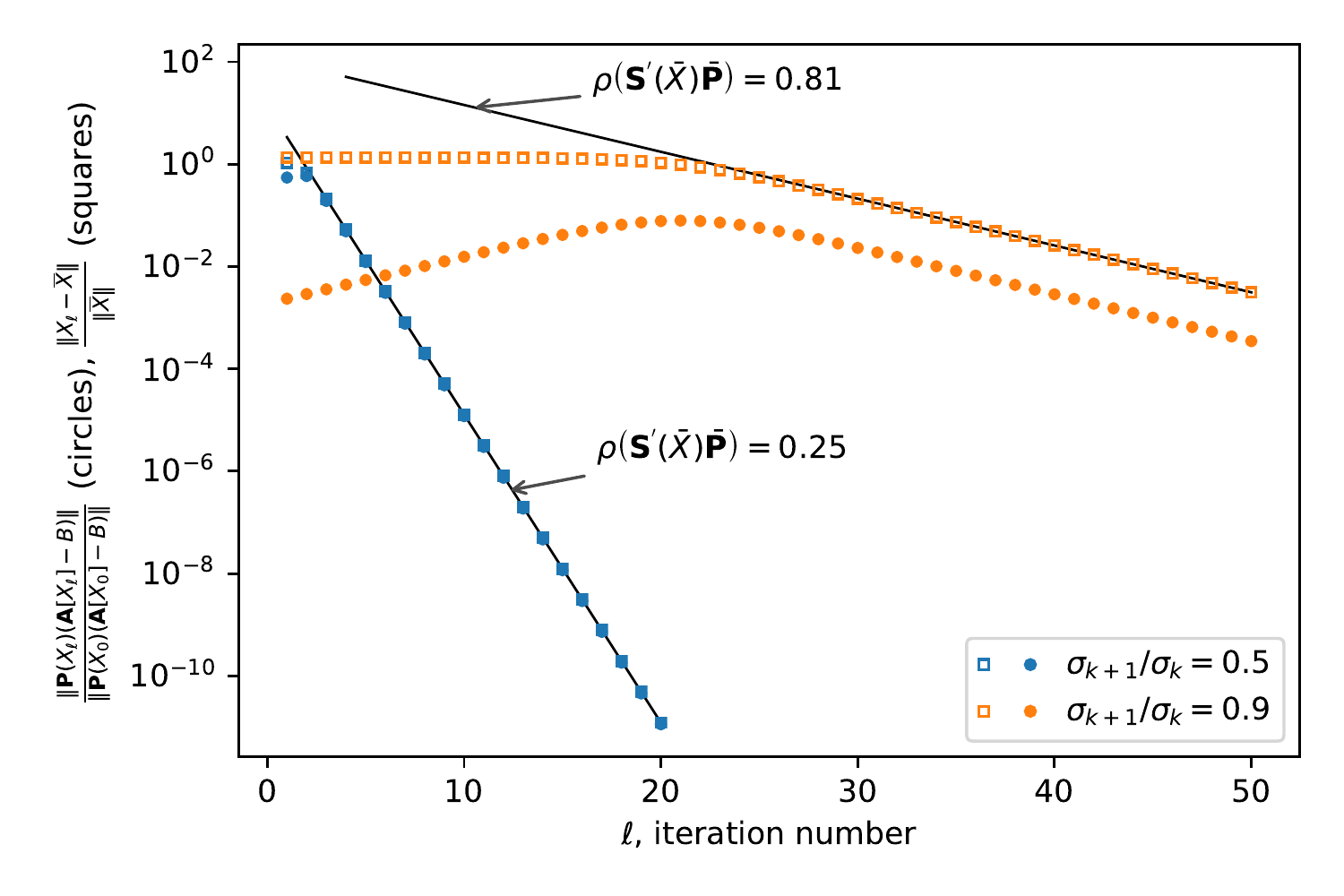}
	}
	\subfloat[]{
	\includegraphics[width=0.5\textwidth]{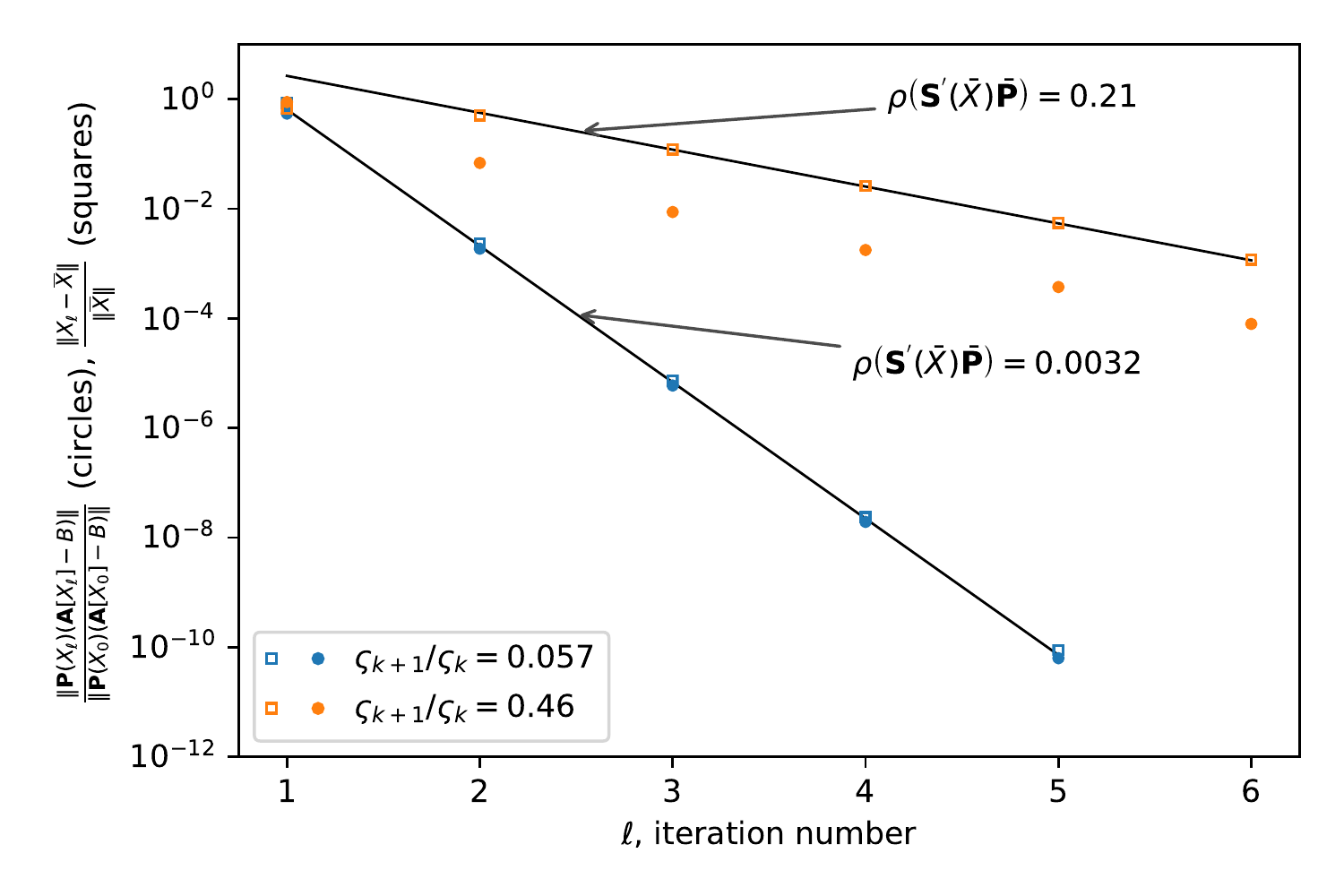}
	}
	\caption{\changed{(a) Relative errors in ALS w.r.t. the iteration number for $\bar{\bA} = \bI$, $k=2$, $\sigma_k=10^{-3}$. Black lines have slopes $\rho(\bS'(\bar X) \bar \bP)=(\sigma_{k+1}/\sigma_k)^2$ for different $\sigma_{k+1}$, while colored dots represent the observed convergence. (b) Same experiment for $\bar{\bA} = A_2 \otimes A_1$ with positive definite $A_1, A_2$. The $\varsigma_k$ are the singular values of $A_1^{-1/2} B A_2^{-1/2}$. The value $\varsigma_k = 10^{-3}$ is kept fixed.}}
	\label{fig: block power}
\end{figure*}

Note that if $\sigma_{k+1}= 0$, then the method (generically) converges in one iteration (so technically superlinear) since row and column spaces of $B$ are found immediately (Remark~\ref{rem: remark about one sweep}). 

\changed{
Note that the other extreme case, when $\sigma_{k+1}=\sigma_k$, is not covered by our local convergence analysis, which does not necessarily mean absence of convergence to \emph{some} best rank-$k$ approximation. However, usually $\bar X$ itself will not be a point of attraction of the block power method for all $X_0$ in the neighborhood. For instance, when $B = \begin{bmatrix} 1 & 0 \\ 0 & 1 \end{bmatrix}$ and $k = 1$, the matrix $\bar X = \frac{1}{2}\begin{bmatrix} 1 & 1 \\ 1 & 1 \end{bmatrix}$ is a best rank-one approximation and a fixed point of the method. However, for $X_0 = \begin{bmatrix} \alpha & \beta \\ \alpha & \beta \end{bmatrix}$, the method becomes stationary after one sweep at $X_1 = \frac{1}{\alpha^2 + \beta^2} \begin{bmatrix} \alpha^2 & \alpha \beta \\ \alpha \beta & \beta^2 \end{bmatrix}$, which is also a best rank-one approximation of $B$, but equals $\bar X$ only when $\alpha = \beta = 1/2$. At the same time, $X_0$ can be arbitrarily close to $\bar X$.
}

\changed{
We also verify our theoretical result on Kronecker product operators. We randomly generate matrices $\tilde{A}_1, \tilde{A}_2 \in \R^{n \times n}$ and use the operator $\bA = \bar{\bA} = A_2 \otimes A_1$ with $A_1 = \tilde{A}_1^{} \tilde{A}_1^T$, $A_2 = \tilde{A}_2^{} \tilde{A}_2^T$ in the function $f$ in~\eqref{eq: quadratic cost function}. The global minimizer $\bar X$ of $f$ on $\mathcal{M}_{\le k}$ is given by~\eqref{eq: global minimizer in Kronecker case} and can be computed using SVD. By Theorem~\ref{thm: theorem for Kronecker product}, the ALS algorithm should converge for almost any $X_0 \in \mathcal{M}_k$ to $\bar X$ and the asymptotic $R$-linear rate is $(\varsigma_{k+1} / \varsigma_k)^2$, where $\varsigma_k$ are the singular values of $A_1^{-1/2} B A_2^{-1/2}$. Figure~\ref{fig: block power} (b) shows perfect agreement with the theoretical prediction. Here again we considered $k=2$, and generated different right hand sides $B$ such that always $\varsigma_2 = 10^{-3}$.
}

\subsection{More general symmetric positive definite $\bar{\bA}$}

We now go beyond Kronecker product operators. First, we consider an entirely random symmetric positive definite matrix
\[
 \bA = \mathbf{R}^\top \mathbf{R} \in \R^{n^2 \times n^2}, 
\]
where $\bold{R}$ is a matrix with each element produced by the standard normal distribution. As another example, we take the highly structured matrix arising in the discretization of a two-dimensional Laplacian on uniform tensor product grid with zero Dirichlet boundary conditions:
\[
\bA = I_n\otimes D_n + D_n \otimes I_n \in \mathbb{R}^{n^2\times n^2}, \quad D_n = (n+1)^2\,\mathrm{tridiag}(-1, 2, -1)_{n\times n}.
\]

Figure~\ref{fig: general A predefined B} displays experimental results for the ALS algorithm with $n=50$ and $k=2$. The matrix $B$ has been chosen such that the solution of the matrix equation $\bA[Y]=B$ (that is, the global minimizer of~\eqref{eq: quadratic cost function} without low-rank constraint) has a predefined distribution of singular values. Similarly to the previous experiments we set $\sigma_2(Y) = 10^{-3}$, while $\sigma_3(Y)$ varies and the goal is to find the best rank-$2$ approximation $\bar X$. As we observe, numerical behaviour is in close agreement with theoretical estimates. While the convergence rate $\rho(\bar \bS'(\bar X) \bar \bP)$ does not equal $\sigma_{k+1} / \sigma_k$ as in the case $\bar \bA = \bI$, it still seems related to this ratio for both choices of $\bar \bA$. Remarkably, $\rho(\bar \bS'(\bar X) \bar \bP)$ is considerably smaller than one even when $\sigma_{k+1}/\sigma_{k}$ is close to one. A decisive question for future work would be for which combinations of $\bar \bA$ and $B$ this can be rigorously shown.

Also note that, in contrast to the case of Kronecker product operators, there is no superlinear convergence in this experiment when $\sigma_{k+1}=0$. In this situation the minimizer of~\eqref{eq: quadratic cost function} on the rank-$k$ variety is the same as the global one, so the curvature-free case considered in sec.~\ref{sec: zero gradient} (zero gradient) applies. Local linear convergence of the ALS method to this minimizer is then guaranteed by Theorem~\ref{thm: linear convergence curvature free}.

\begin{figure*}%[hb]
\centering
\subfloat[]{\includegraphics[width=0.5\textwidth]{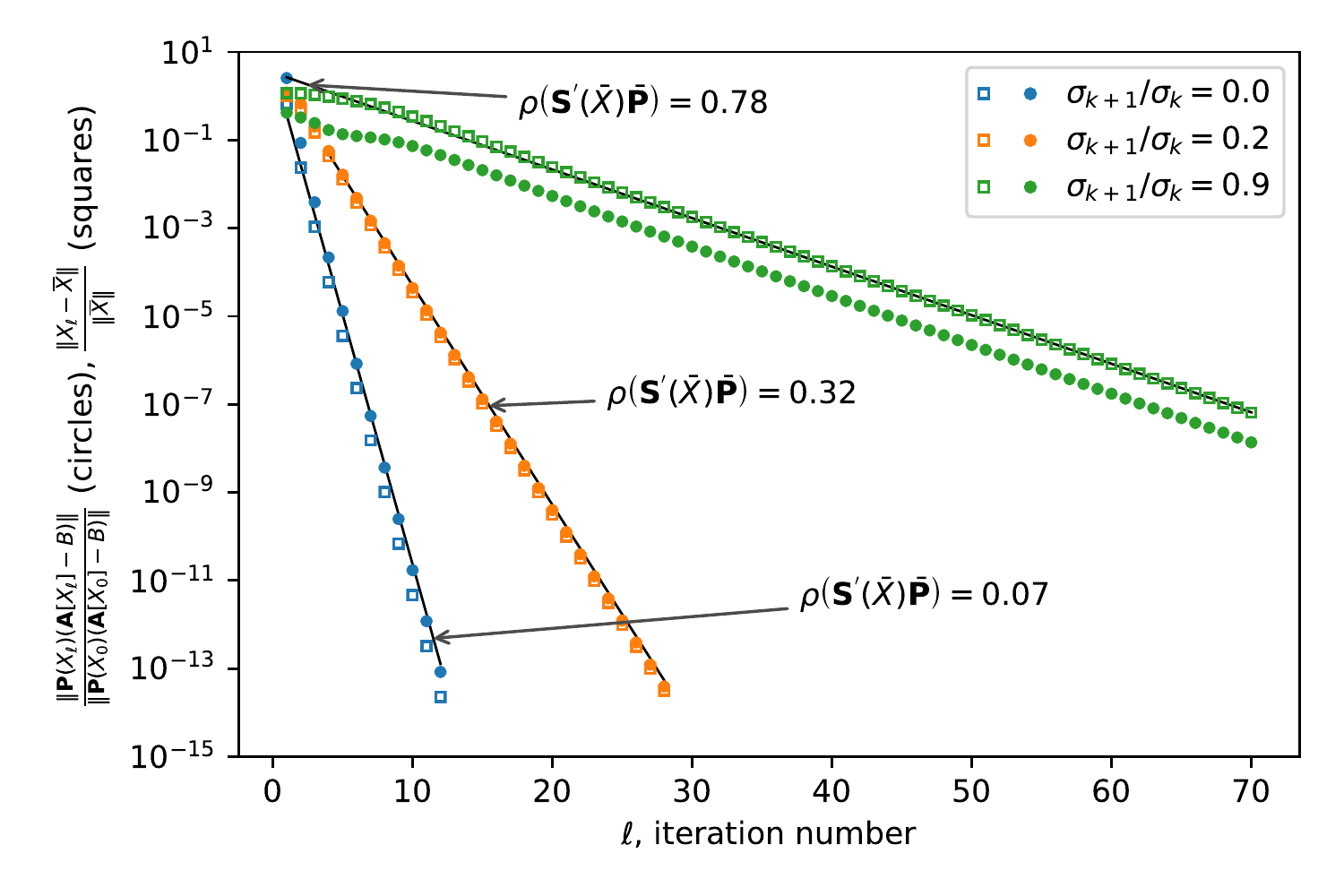}\label{subfig: rand1}
 }
\subfloat[]{\includegraphics[width=0.5\textwidth]{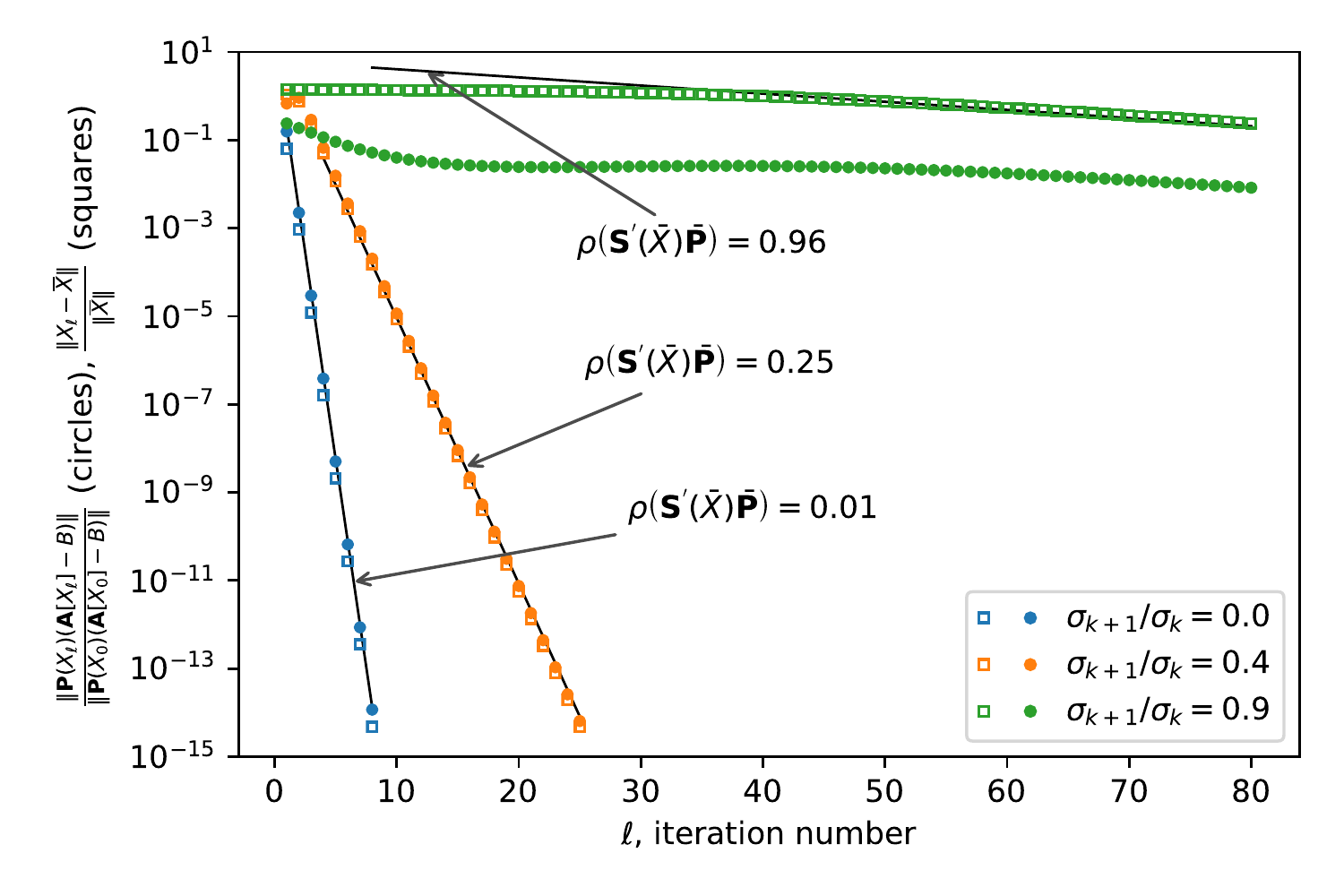}\label{subfig: lapl1}
 }
\caption{Relative errors in ALS w.r.t. the iteration number for $\bA$: \protect\subref{subfig: rand1} random symmetric positive definite matrix and \protect\subref{subfig: lapl1} Laplace matrix. Experiments for $k=2$ and predefined singular value $\sigma_k=10^{-3}$ of the solution of $\bA[X]=B$. Black lines have slopes corresponding to the theoretical convergence rate for different $\sigma_{k+1}$.}
\label{fig: general A predefined B}
\end{figure*}

\section{Conclusion}\label{sec: conclusion}

The goal of this paper was to derive transparent conditions for the local linear convergence of AO algorithms for multilinear and low-rank optimization, specifically the ALS algorithm, which reflect the underlying geometry and do not depend on the representation of low-rank tensors as in previous works. Due to multilinearity of the cost function, single optimization steps take place on linear subspaces, leading (in particular for quadratic cost functions) to an interpretation of AO as a nonlinear subspace correction method (with changing subspaces). Using a sufficiently general framework, a formula for the derivative of the nonlinear iteration function can be obtained (Theorem~\ref{thm: derivative of S}), which displays the interplay of terms from the classic linear subspace correction method with the curvature of the underlying low-rank manifold and the gradient of the cost function in a clear way. The main task remains to show that the spectral radius of this derivative is less than one in applications of interest. This is true in low-rank optimization tasks where the global minimizer lies on the considered low-rank manifold. The case where this is not true is more subtle. For AO for low-rank matrices, the curvature terms can be considerably simplified, which allows for an alternative, analytic proof for the well-known convergence rate of the simultaneous orthogonal iteration for computing the dominant left and right singular subspaces of a matrix. While the main trick (Theorem~\ref{th: convergence rate for identity Hessian}) that was used to obtain this result may not apply in more general situations, we hope that our framework can be a useful starting point in future work for finding rigorous statements for the observed linear convergence of AO and ALS in other applications, like low-rank solutions of Lyapunov equations (cost function~\eqref{eq: quadratic cost function}) and low-rank tensor approximation.

\subsection*{Related work}

In~\cite{U2012} and~\cite{RU2013} the local convergence of the ALS algorithm has been analyzed for low-rank tensor approximation in the CP and tensor train formats, respectively, using the nonlinear Gauss-Seidel approach for a cost function of the form~\eqref{eq: multilinear composition}, e.g., using an explicit representation of low-rank tensors. To address the problem that the Hessian of this cost function cannot be positive definite due to nonuniqueness of tensor representations, equivalence classes of representations (level sets of the function $\tau$ in~\eqref{eq: multilinear composition}) are introduced. Linear convergence is then established for the case that the null space of the Hessian equals the tangent space of the orbit of equivalent representations. The idea is certainly analogous to restricting the operator $\bS'(\bar x)$ to the subspace $T(\bar x)$ as in the present paper, but we believe that our approach provides a much clearer picture by avoiding the unintuitive concept of equivalent representations. A formula
\begin{equation}\label{eq: Hessian for multilinear}
\langle h, \nabla F^2(\xi) h \rangle = \langle \tau'(\xi) h, \nabla^2 f(x) \tau'(\xi) h \rangle + \langle \nabla f(x), \tau''(\xi)[h,h] \rangle
\end{equation}
for the Hessian at $x = \tau(\xi)$ is given in~\cite{RU2013}, which features the Hessian $\nabla^2 f(x)$ on the tangent space of the image of $\tau$, and the interaction of curvature ($\tau''(x)$) and gradient $\nabla f$ as in our work (cf. the definition~\eqref{eq: operator N} of $\bar \bN_i$). In particular, it is concluded that local convergence is guaranteed if $\nabla f(\bar x) = 0$ under an injectivity assumption on $\tau'(\bar \xi)$.

For optimization problems on manifolds, the interplay of global Hessian, gradient, and curvature as displayed in~\eqref{eq: Hessian for multilinear} is gathered in the important concept of the \emph{Riemannian Hessian}. This is thoroughly discussed in~\cite{ATMA2009}; see, in particular, section~6 therein. Similar to its role in smooth optimization in linear spaces, the positivity of the Riemannian Hessian ensures local (Riemannian) convexity and hence contractivity of many Riemannian optimization methods; see the book~\cite{absil}. For manifolds of low-rank matrices, the curvature terms in this Hessian have been obtained in other works. Specifically,~\cite[Proposition 2.2]{V2013} features a formula that makes the Kronecker-type interplay between $\bar X^+$ and $\nabla f(\bar X)$ in the curvature (see Lemma~\ref{lem: derivatives of projections}) clearly visible, albeit for a special case of the cost function~\eqref{eq: quadratic cost function} related to matrix completion (with $\bA = \bP_{\Omega}$ being a projection on given entries $\Omega$). In~\cite{KSV2016}, the curvature term in the Riemannian Hessian is explicitly neglected to derive Riemannian Gauss-Newton-type methods on low-rank tensor manifolds.

In the works~\cite{M2013} and~\cite{EHK2015}, convergence of the ALS method for low-rank tensor approximation has been investigated using alternative techniques. In particular, questions on cluster points and global convergence are addressed. Also, examples for sublinear, linear, and superlinear convergence are presented in~\cite{EHK2015}. The references~\cite{GolubZhang2001,WangChu2014,Uschmajew2015,EK2015} specifically deal with the convergence of the higher-order power method.

Finally, we mention the work~\cite{TW2016}, in which global convergence of a related method (called scaled alternating steepest descent) for matrix completion is investigated. 

\bibliographystyle{plain}
\bibliography{subspace.bib}

\end{document}